\documentclass[11pt]{amsart}
\usepackage{amssymb}
\usepackage{mathrsfs}
\usepackage{enumerate}
\makeatletter
\@namedef{subjclassname@2010}{%
  \textup{2010} Mathematics Subject Classification}
\makeatother
\newtheorem{lemma}{Lemma}[section]
\newtheorem{theorem}{Theorem}[section]
\newtheorem{proposition}{Proposition}[section]
\newtheorem{corollary}{Corollary}[section]
\newtheorem{definition}{Definition}[section]
\newtheorem{remark}{Remark}[section]

\begin{document}

\title{Equal-norm Parseval $K$-frames  in Hilbert spaces}

\author[V. Sadri]{Vahid Sadri$^*$}
\address{Department of Mathematics, Faculty of Tabriz  Branch\\ Technical and Vocational University (TVU), East Azarbaijan, Iran.}
\email{vahidsadri57@gmail.com}

\author[Gh. Rahimlou]{Gholamreza Rahimlou}%
\address{Department of Mathematics, Faculty of Tabriz  Branch\\ Technical and Vocational University (TVU), East Azarbaijan, Iran.}
\email{grahimlou@gmail.com}

\begin{abstract}
In this paper, we focus on  frames of operators or $K$-frames on Hilbert spaces in  Parseval cases. Since equal-norm tight frames play important roles for robust data transmission, we aim to study this topics on Parseval $K$-frames. We will show that each finite set of equal-norms of vectors can be extended to an equal-norm $K$-frame. Also, we present a correspondence between Parseval $K$-frames and the set of all closed subspaces of a finite Hilbert space. Finally, a construction of equal-norm dual $K$-frames will be introduced.
\end{abstract}

\subjclass[2010]{Primary 42C15; Secondary 46C99, 41A58}

\keywords{Perseval frames, $K$-frames, $K$-dual frames.\\
*Corresponding Author.}

\maketitle

\section{Introduction}
The concept of frames in Hilbert spaces has been introduced by Duffin and Schaeffer \cite{ds} in 1952 to study some deep problems in nonharmonic Fourier series.
Frames have a significant role in both pure and applied mathematics, so that these are a fundamental research area in mathematics, computer science and quantum information and several new applications have been developed, e. g. besides traditional application as signal processing, image processing, data compression and sampling theory.

Frames are redundant sets of vectors in a Hilbert space which yield one natural representation for each vector in the space
but which may have infinitely many different representations for a given  vector \cite{bolc1,bolc2,casazza1,casazza3,cevet,dau,heil,stero,ds,hol}.
Recently, several new applications for (uniform tight) frames have been developed.
The first developed by Goyal, Kova\^cevi\'c, and Vetterli \cite{kova1,kova2,kova3}, uses the redundancy of a frame to mitigate the effect of losses in packet-based communication systems. Modern communication networks transport packets of data from a source to a recipient. These packets are sequences of information bits of a certain length surrounded by error-control, addressing, and timing information that assure that the packet is delivered without errors. This is accomplished by not delivering the packet if it contains errors. Failures here are due primarily to buffer overflows at intermediate nodes in the network. So to most users, the behavior of a packet network is not characterized by random loss, but by unpredictable transport time This is due to a protocol, invisible to the user, that retransmits lost packets. Retransmission of packets takes much longer than the original transmission in many applications, retransmission of lost packets is not feasible and the potential for large delay is unacceptable.
Another recent important application of uniform normalized tight frames is in multiple-antenna code design \cite{hassib}.
Much theoretical work has been done to show that communication systems which employ multiple antennas can have very high
channel capacities \cite{fos,telat}.

 Frames for operators or $K$-frames have been introduced by G\u avru\c ta in \cite{gal} to study the nature of atomic systems for a separable Hilbert space  with respect to a bounded linear operator $K$. It is a well-known fact that $K$-frames are more general than the classical frames and due to higher generality of $K$-frames, many properties of frames may not hold for $K$-frames.

In this paper, we will prove some fundamental results for the parseval $K$-frames, which have already been proved for tight frames in \cite{casazza2} and we will show that each finite set of $K$-norm vectors can be extended to become a $K$-norm of a $K$-frame.

Throughout this paper, $H$, $H_1,$ and $H_2$ are separable Hilbert spaces and $\mathcal{B}(H_1,H_2)$ is the collection of all the bounded linear operators of $H_1$ into $H_2$. If $H_1=H_2=H$, then $\mathcal{B}(H,H)$ will be denoted by $\mathcal{B}(H)$.  The notation $W\prec H$ means that $W$ is a closed subspace of $H$, also $P_{W}$ is the orthogonal projection from $H$ onto a closed subspace $W\prec H$ and  $\lbrace H_j\rbrace_{j\in\Bbb J}$ is a sequence of Hilbert spaces where $\Bbb J$ is a subset of $\Bbb Z$. When $\dim H<\infty$, we say that $H$ is the finite Hilbert space.

\section{Review of operator theory and $K$-frames}
In this part, we aim to review of some topics and notations about bounded operators and  frames on Hilbert spaces.

Let $U$ be an operator with closed range (or, $\mathcal{R}(U)$ be closed), then there exists a
\textit{right-inverse operator} $U^ \dagger$ (pseudo-inverse of $U$) in the
following senses.
\begin{lemma}\cite{ch}\label{lem1}
Let $U\in\mathcal{B}(H_1,H_2)$  be a bounded operator with closed range $\mathcal{R}(U)$. Then there exists a bounded operator $U^\dagger \in\mathcal{B}(H_2,H_1)$ for which
$$UU^{\dagger} x=x, \ \ x\in \mathcal{R}(U).$$
\end{lemma}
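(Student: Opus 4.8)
The plan is to build $U^\dagger$ as the inverse of $U$ restricted to a complement of its kernel, followed by the orthogonal projection onto $\mathcal{R}(U)$. Write $N=\mathcal{N}(U)$ for the null space of $U$, and let $\widetilde{U}$ denote the restriction of $U$ to $N^\perp$, viewed as an operator from $N^\perp$ into $\mathcal{R}(U)$. First I would verify that $\widetilde{U}\colon N^\perp\to\mathcal{R}(U)$ is a bijection: it is injective because $\mathcal{N}(\widetilde{U})=N\cap N^\perp=\{0\}$, and it is onto $\mathcal{R}(U)$ because any $y\in\mathcal{R}(U)$ is $y=Ux$ for some $x\in H_1$, and writing $x=x_0+x_1$ with $x_0\in N$ and $x_1\in N^\perp$ gives $y=Ux_1=\widetilde{U}x_1$.

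Next I would bring in completeness. Since $\mathcal{R}(U)$ is closed in the Hilbert space $H_2$, it is itself a Hilbert space, and $N^\perp\prec H_1$ is likewise a Hilbert space. Hence $\widetilde{U}$ is a bounded bijective linear map between Banach spaces, and the bounded inverse theorem (a corollary of the open mapping theorem) guarantees that its set-theoretic inverse is bounded, i.e.\ $\widetilde{U}^{-1}\in\mathcal{B}(\mathcal{R}(U),N^\perp)$. Finally, define $U^\dagger:=\widetilde{U}^{-1}P_{\mathcal{R}(U)}\in\mathcal{B}(H_2,H_1)$, which is bounded as a composition of bounded operators. For $x\in\mathcal{R}(U)$ we have $P_{\mathcal{R}(U)}x=x$, so $U^\dagger x=\widetilde{U}^{-1}x\in N^\perp$ and therefore $UU^\dagger x=\widetilde{U}\bigl(\widetilde{U}^{-1}x\bigr)=x$, which is the claimed identity.

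I expect the only real point of substance to be the appeal to the open mapping theorem: this is precisely where the hypothesis that $\mathcal{R}(U)$ is closed is essential, since it makes $\widetilde{U}$ a continuous bijection between complete spaces, forcing its inverse to be continuous as well. The remaining steps are routine manipulations with orthogonal complements and the fact that $P_{\mathcal{R}(U)}$ acts as the identity on $\mathcal{R}(U)$; if desired one could also record that this construction makes $U^\dagger$ the unique right inverse whose range lies in $N^\perp$, which is the Moore--Penrose pseudo-inverse, though the statement as given only requires existence.
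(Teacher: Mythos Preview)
Your argument is correct: restricting $U$ to $(\ker U)^\perp$ gives a bounded bijection onto the closed subspace $\mathcal{R}(U)$, the open mapping theorem yields a bounded inverse, and precomposing with $P_{\mathcal{R}(U)}$ produces the desired $U^\dagger$. Note that the paper does not supply its own proof of this lemma but merely cites it from Christensen~\cite{ch}; your construction is exactly the standard one given there, so there is nothing to contrast.
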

\begin{lemma}\cite{ch}\label{lem2}
Let $U\in\mathcal{B}(H_1,H_2)$. Then the following assertions hold:
\begin{enumerate} \item
 $\mathcal{R}(U)$ is closed in $H_2$ if and only if $\mathcal{R}(U^{\dagger})$ is closed in $H_1$.
\item $(U^{\ast})^\dagger=(U^\dagger)^\ast$.
\item
The orthogonal projection of $H_2$ onto $\mathcal{R}(U)$ is given by $UU^{\dagger}$.
\item
The orthogonal projection of $H_1$ onto $\mathcal{R}(U^{\dagger})$ is given by $U^{\dagger}U$.
\item$\ker U^{\dagger}=\mathcal{R}^{\bot}(U)$ and $\mathcal{R}(U^{\dagger})=(\ker U)^{\bot}$.
 \end{enumerate}
\end{lemma}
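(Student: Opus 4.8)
The plan is to derive all five assertions from one explicit description of $U^{\dagger}$. Since $\mathcal R(U)$ is closed, the corestriction $\widetilde U:=U|_{(\ker U)^{\bot}}\colon (\ker U)^{\bot}\to\mathcal R(U)$ is a bounded bijection, hence by the open mapping theorem it has a bounded inverse $\widetilde U^{-1}\colon\mathcal R(U)\to(\ker U)^{\bot}$; put $U^{\dagger}:=\widetilde U^{-1}P_{\mathcal R(U)}\in\mathcal B(H_2,H_1)$. The first step is to check this is the operator of Lemma \ref{lem1}: for $y\in H_2$ one has $U^{\dagger}y\in(\ker U)^{\bot}$ and $UU^{\dagger}y=\widetilde U\,\widetilde U^{-1}P_{\mathcal R(U)}y=P_{\mathcal R(U)}y$, so $UU^{\dagger}=P_{\mathcal R(U)}$; in particular $UU^{\dagger}x=x$ for $x\in\mathcal R(U)$. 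This already gives $(3)$ and the self-adjointness of $UU^{\dagger}$.

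Next I would read off the kernel and range of $U^{\dagger}$. Since $\widetilde U^{-1}$ is injective, $U^{\dagger}y=0$ iff $P_{\mathcal R(U)}y=0$ iff $y\in\mathcal R(U)^{\bot}$, so $\ker U^{\dagger}=\mathcal R^{\bot}(U)$; and since $P_{\mathcal R(U)}$ maps $H_2$ onto $\mathcal R(U)$ while $\widetilde U^{-1}$ maps $\mathcal R(U)$ onto $(\ker U)^{\bot}$, we get $\mathcal R(U^{\dagger})=(\ker U)^{\bot}$, which is closed. This proves $(5)$, and the forward direction of $(1)$ is then immediate. For $(4)$, decompose $x=x_{0}+x_{1}$ with $x_{0}\in\ker U$ and $x_{1}\in(\ker U)^{\bot}$; then $Ux=\widetilde U x_{1}$, so $U^{\dagger}Ux=\widetilde U^{-1}P_{\mathcal R(U)}\widetilde U x_{1}=\widetilde U^{-1}\widetilde U x_{1}=x_{1}=P_{(\ker U)^{\bot}}x=P_{\mathcal R(U^{\dagger})}x$. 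Hence $U^{\dagger}U=P_{\mathcal R(U^{\dagger})}$, which is $(4)$ and shows $U^{\dagger}U$ is self-adjoint.

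The remaining point, $(2)$, is the one I expect to take the most care. The computations above show that $U^{\dagger}$ satisfies the four Moore--Penrose relations $UU^{\dagger}U=U$, $U^{\dagger}UU^{\dagger}=U^{\dagger}$, $(UU^{\dagger})^{\ast}=UU^{\dagger}$ and $(U^{\dagger}U)^{\ast}=U^{\dagger}U$, and a short algebraic argument shows that an operator obeying these four relations is unique. Because $\mathcal R(U)$ is closed, so is $\mathcal R(U^{\ast})$ by the closed range theorem, hence $(U^{\ast})^{\dagger}$ exists and is the unique solution of the same four relations with $U$ replaced by $U^{\ast}$. Taking adjoints in the four relations for $U^{\dagger}$ shows that $(U^{\dagger})^{\ast}$ solves precisely those relations for $U^{\ast}$, so by uniqueness $(U^{\ast})^{\dagger}=(U^{\dagger})^{\ast}$; this is $(2)$, and the converse direction of $(1)$ follows from the same circle of ideas via $\mathcal R(U^{\dagger})=(\ker U)^{\bot}=\overline{\mathcal R(U^{\ast})}$. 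The main obstacle is not any single computation but the bookkeeping: establishing uniqueness of the Moore--Penrose solution and matching the adjointed identities exactly to the defining ones for $U^{\ast}$, after which $(2)$ drops out cleanly.
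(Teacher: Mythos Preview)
The paper does not prove this lemma at all; it is quoted verbatim from Christensen's book \cite{ch} and used as a black box, so there is no in-paper argument to compare against. Your construction via the corestriction $\widetilde U\colon(\ker U)^{\bot}\to\mathcal R(U)$ and the Moore--Penrose uniqueness argument for $(2)$ is the standard one and is correct. One small remark on $(1)$: in the framework adopted here (Lemma~\ref{lem1}), $U^{\dagger}$ is only defined once $\mathcal R(U)$ is closed, so the ``converse'' direction is essentially vacuous; your computation $\mathcal R(U^{\dagger})=(\ker U)^{\bot}$ already settles the substantive content, and there is no need to invoke the closed range theorem separately for that item.
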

The operator $U:H\rightarrow H$ is called a \textit{unitary operator}  if $U^*=U^{-1}$. In this case, it is obvious that $\Vert U\Vert=1$. The following lemma characterizes all orthonormal bases by unitary operators.
\begin{lemma}\cite{ch}
Let $\{e_j\}_{j\in\Bbb J}$ be an orthonormal basis for $H$. Then, the orthonormal bases for $H$ are precisely the sets $\{Ue_j\}_{j\in\Bbb J}$ where $U:H\rightarrow H$ is unitary.
\end{lemma}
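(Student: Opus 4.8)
The plan is to prove the two inclusions separately. First I would show that if $U\colon H\to H$ is unitary then $\{Ue_j\}_{j\in\Bbb J}$ is an orthonormal basis. Orthonormality is immediate: for $i,j\in\Bbb J$ one has $\langle Ue_i,Ue_j\rangle=\langle e_i,U^*Ue_j\rangle=\langle e_i,e_j\rangle=\delta_{ij}$, since $U^*U=U^{-1}U=I$. For completeness, given $x\in H$ put $y=U^{-1}x=U^*x$ and expand $y=\sum_{j\in\Bbb J}\langle y,e_j\rangle e_j$; applying the bounded operator $U$ termwise and using $\langle y,e_j\rangle=\langle U^*x,e_j\rangle=\langle x,Ue_j\rangle$ yields $x=\sum_{j\in\Bbb J}\langle x,Ue_j\rangle Ue_j$. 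Hence $\{Ue_j\}_{j\in\Bbb J}$ is a complete orthonormal system, i.e. an orthonormal basis.

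Conversely, suppose $\{f_j\}_{j\in\Bbb J}$ is an arbitrary orthonormal basis of $H$. I would define $U$ first on the dense set of finite linear combinations by $U\bigl(\sum_j c_je_j\bigr)=\sum_j c_jf_j$ and then check that it extends to a bounded operator on all of $H$: for $x=\sum_j\langle x,e_j\rangle e_j$, Parseval's identity for $\{e_j\}$ gives $\sum_j|\langle x,e_j\rangle|^2=\|x\|^2<\infty$, so $\sum_j\langle x,e_j\rangle f_j$ converges in $H$ and, by Parseval's identity for $\{f_j\}$, has norm $\|x\|$. Thus $U$ is a well-defined linear isometry with $Ue_j=f_j$. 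It is surjective, because every $y\in H$ may be written $y=\sum_j\langle y,f_j\rangle f_j=U\bigl(\sum_j\langle y,f_j\rangle e_j\bigr)$. Finally, a surjective isometry is unitary: $\|Ux\|=\|x\|$ forces $U^*U=I$, and bijectivity then gives $U^*=U^*(UU^{-1})=(U^*U)U^{-1}=U^{-1}$, so $U^*=U^{-1}$.

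I do not expect a genuine obstacle here; the only points needing a little care are the convergence of the rearranged series (handled by Bessel/Parseval for the two orthonormal systems) and the justification that $U$ may be passed through the infinite sum in the first part, which is legitimate precisely because $U$ is bounded. One could instead phrase the converse by identifying both expansions with $\ell^2(\Bbb J)$ and noting that $e_j\mapsto f_j$ induces a unitary via these identifications, but the direct argument above seems cleanest and keeps the index set $\Bbb J$ explicit throughout.
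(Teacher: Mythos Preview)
Your argument is correct and is the standard textbook proof of this fact; both directions are handled cleanly, and the care you take with convergence (Parseval for each orthonormal system, boundedness of $U$ to pass through the series) is appropriate.

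There is nothing to compare against, however: the paper does not supply its own proof of this lemma. It is stated with a citation to \cite{ch} (Christensen's book) and used as background, so the authors simply quote the result without argument. Your write-up would serve perfectly well as the omitted proof.
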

Suppose that $W,V$ are closed subspaces of $H$ with $$\dim W=\mathop{\rm rank}P_V=m,$$ then we can check that there is an unitary operator $U$ on $H$ such that $U\Big\vert_{W}=P_V$. Indeed, if we choose $Ue_j=e'_j$ where $\{e_j\}_{j=1}^{m}$ and $\{e'_j\}_{j=1}^{m}$ are orthonormal bases for $W$ and $V$, respectively, then $U$ is unitary.

 Assume that $T,S$ are operators on $H$. The operator $S$ is called to be \textit{unitarily equivalent} to $T$ if there is a unitary operator $U$ on $H$ such that $S=UTU^*$.
\begin{lemma}(\cite{dag}).\label{dag}
Let $L_1\in\mathcal{B}(H_1,H)$ and $L_2\in\mathcal{B}(H_2,H)$ be  on given Hilbert spaces. Then the following assertions are equivalent:
\begin{enumerate}\item
$\mathcal{R}(L_1)\subseteq\mathcal{R}(L_2)$;\item
$L_1L_1^*\leq\lambda^2 L_2L_2^*$ for some $\lambda>0$;\item
there exists a  mapping $U\in\mathcal{B}(H_1,H_2)$ such that $L_1=L_2 U$.
\end{enumerate}

Moreover, if those conditions are valid, then there exists a unique operator $U$ such that
\begin{enumerate}
\item[(a)] $\Vert U\Vert^2=\inf\{\alpha>0 \ \vert \ L_1L_1^*\leq\alpha L_2L_2^*\};$
\item[(b)] $\mathcal{N}(L_1)=\mathcal{N}(U);$
\item[(c)] $\mathcal{R}(U)\subseteq\overline{\mathcal{R}(L_2^*)}.$
\end{enumerate}
\end{lemma}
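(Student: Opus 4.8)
The plan is to run the equivalences off the single analytic fact that the operator inequality in (2) is nothing but a norming estimate for the adjoints: $L_1L_1^*\le\lambda^2L_2L_2^*$ holds if and only if $\|L_1^*x\|\le\lambda\|L_2^*x\|$ for every $x\in H$ (just write both sides as $\langle L_iL_i^*x,x\rangle=\|L_i^*x\|^2$). With this reformulation in hand, two of the implications are immediate. If $L_1=L_2U$ then $L_1L_1^*=L_2UU^*L_2^*\le\|U\|^2L_2L_2^*$, so (3)$\Rightarrow$(2) with $\lambda=\|U\|$; and $\mathcal R(L_1)=\mathcal R(L_2U)\subseteq\mathcal R(L_2)$, so (3)$\Rightarrow$(1). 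It therefore suffices to prove (2)$\Rightarrow$(3) and (1)$\Rightarrow$(3).

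For (2)$\Rightarrow$(3) I would build the factorizing operator by hand. On the (not necessarily closed) subspace $\mathcal R(L_2^*)\subseteq H_2$ define $D\bigl(L_2^*x\bigr)=L_1^*x$. This is well defined because $L_2^*x=L_2^*y$ forces $\|L_1^*(x-y)\|\le\lambda\|L_2^*(x-y)\|=0$, and the same estimate gives $\|D(L_2^*x)\|\le\lambda\|L_2^*x\|$, so $D$ is bounded with norm $\le\lambda$ on $\mathcal R(L_2^*)$; extend it by continuity to $\overline{\mathcal R(L_2^*)}$ and by $0$ on $\overline{\mathcal R(L_2^*)}^{\perp}$ to get $D\in\mathcal B(H_2,H_1)$ with $DL_2^*=L_1^*$ and $\|D\|\le\lambda$. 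Setting $U:=D^*\in\mathcal B(H_1,H_2)$ and taking adjoints yields $L_2U=L_1$, which is (3). For (1)$\Rightarrow$(3), for each $x\in H_1$ the vector $L_1x$ lies in $\mathcal R(L_2)$, so $L_2y=L_1x$ is solvable; let $Ux$ be its unique minimal-norm solution, equivalently the one in $(\ker L_2)^{\perp}=\overline{\mathcal R(L_2^*)}$. Linearity of $U$ is routine, and boundedness follows from the closed graph theorem: if $x_n\to x$ and $Ux_n\to z$, then $L_1x_n=L_2Ux_n\to L_2z$ while also $L_1x_n\to L_1x$, so $L_2z=L_1x$; since $z$ stays in the closed subspace $\overline{\mathcal R(L_2^*)}$ it must be the minimal-norm solution, i.e. $z=Ux$. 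Hence $U\in\mathcal B(H_1,H_2)$ and $L_2U=L_1$.

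For the ``moreover'' clause, fix the operator $U$ produced above, which by construction satisfies $\mathcal R(U)\subseteq\overline{\mathcal R(L_2^*)}$, giving (c) at once (indeed $\overline{\mathcal R(U)}=\overline{\mathcal R(D^*)}=(\ker D)^{\perp}\subseteq\overline{\mathcal R(L_2^*)}$ because $D$ vanishes on $\overline{\mathcal R(L_2^*)}^{\perp}$). Uniqueness: if $L_2U=L_2U'=L_1$ with both ranges inside $\overline{\mathcal R(L_2^*)}=(\ker L_2)^{\perp}$, then $\mathcal R(U-U')\subseteq\ker L_2\cap(\ker L_2)^{\perp}=\{0\}$, so $U=U'$. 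This forces statement (3)'s operator to be this very $U$, and (b) follows: $Ux=0$ gives $L_1x=L_2Ux=0$, while $L_1x=0$ gives $L_2Ux=0$, so $Ux\in\ker L_2\cap\overline{\mathcal R(L_2^*)}=\{0\}$. Finally (a): for every admissible $\lambda$ the construction gives $\|U\|=\|D\|\le\lambda$, so $\|U\|^2\le\alpha$ whenever $L_1L_1^*\le\alpha L_2L_2^*$; conversely $L_1L_1^*=L_2UU^*L_2^*\le\|U\|^2L_2L_2^*$ shows $\|U\|^2$ belongs to that set of $\alpha$'s, so the infimum equals $\|U\|^2$.

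I expect the only genuinely delicate point to be the construction in (2)$\Rightarrow$(3): checking that $D$ is well defined and bounded on the possibly non-closed subspace $\mathcal R(L_2^*)$ before extending it, and — on the other route — invoking the closed graph theorem correctly while tracking that the minimal-norm preimage stays in the closed subspace $\overline{\mathcal R(L_2^*)}$. Everything after that is bookkeeping with adjoints and orthogonal complements.
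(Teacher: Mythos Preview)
Your argument is correct and is essentially the classical proof of Douglas's factorization lemma. Note, however, that the paper does not supply a proof of this statement at all: it is quoted as Lemma~\ref{dag} with a citation to \cite{dag} and used as a black box. So there is no ``paper's proof'' to compare against; what you have written is close to Douglas's original argument. The key analytic step---defining $D$ on $\mathcal R(L_2^*)$ by $D(L_2^*x)=L_1^*x$, checking well-definedness and the bound $\|D\|\le\lambda$ from the inequality $\|L_1^*x\|\le\lambda\|L_2^*x\|$, then extending and taking the adjoint---is exactly how Douglas does (2)$\Rightarrow$(3). Your separate route (1)$\Rightarrow$(3) via minimal-norm solutions and the closed graph theorem is a legitimate alternative (and in fact produces the same operator, as your uniqueness argument shows), though strictly speaking it is redundant once you have (2)$\Rightarrow$(3) and the trivial (3)$\Rightarrow$(1); one could close the cycle by proving (1)$\Rightarrow$(2) directly, but your path works just as well. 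The verification of (a), (b), (c) and the uniqueness under condition (c) is clean and complete.
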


\begin{definition}[frame]
Let $\{f_j\}_{j\in\Bbb J}$ be a sequence of members of $H$. We say that $\{f_j\}_{j\in\Bbb J}$ is a
frame  for $H$ if there exist $0<A\leq B<\infty$ such that for
each $f\in H$,
\begin{eqnarray}\label{ord}
A\Vert f\Vert^2\leq\sum_{j\in\Bbb J}\vert\langle f,f_j\rangle\vert^2\leq B\Vert f\Vert^2.
\end{eqnarray}
\end{definition}
The constants $A$ and $B$ are called frame bounds. If the right hand of \eqref{ord} holds, we say that $\{f_j\}_{j\in\Bbb J}$ is a Bessel sequence with bound $B$.
We say that $\{f_j\}_{j\in\Bbb J}$ is a $A$-tight frame  for $H$, if we have
$$\sum_{j\in\Bbb J}\vert\langle f,f_j\rangle\vert^2=A\Vert f\Vert^2,$$
for each $f\in H$. If $A=1$, the set $\{f_j\}_{j\in\Bbb J}$ is called a Parseval frame.

It is easy to check that if $\{f_j\}_{j\in\Bbb J}$ is a frame for $H$ with bounds $A$ and $B$, then by \eqref{ord}, the set $\{Pf_j\}_{j\in\Bbb J}$ is a frame for $PH$ with the same bounds where, $P$ is an orthonormal projection on $H$. We say that two frames $\{f_j\}_{j\in\Bbb J}$ and $\{g_j\}_{j\in\Bbb J}$ are \textit{unitarily equivalent}  if there is a unitary operator $U$ on $H$ such that $g_j=Uf_j$. In this case, we treat two frames as the same.

Let $\{f_j\}_{j\in\Bbb J}$ be a Bessel sequence, then the synthesis and the analysis operators of a given frame are defined by
\begin{align}\label{ops}
T:&\ell^{2}(\Bbb N)\rightarrow H,\qquad T\lbrace c_j\rbrace_{j\in\Bbb J}=\sum_{j\in\Bbb J}c_j f_j,\\
T^*&:H\rightarrow\ell^{2}(\Bbb N) ,\qquad T^*f=\{\langle f,f_j\rangle\}_{j\in\Bbb J}.\label{ops2}
\end{align}
Now, the frame operator is defined by $S=TT^*$ and this is an invertible and positive operator on $H$ and also(see \cite{ch})
$$Sf=\sum_{j\in\Bbb J}\langle f,f_j\rangle f_j,$$
and
$$\langle Sf,f\rangle=\sum_{j\in\Bbb J}\vert\langle f,f_j\rangle\vert^2,$$
for each $f\in H$.
The following result characterizes all Parseval frames as a projection of an orthonormal basis from a larger space.
\begin{lemma}\label{n}\cite{casazza2}
A set $\{f_j\}_{j\in\Bbb J}$ in $H$ is a Parseval frame if and only if there is a larger Hilbert space $M\supseteq H$ and an orthonormal basis $\{e_j\}_{j\in\Bbb J}$ for $M$ so that the orthonormal projection $P$ of $M$ onto $H$ satisfies $f_j=P e_j$ for each $j\in\Bbb J$.
\end{lemma}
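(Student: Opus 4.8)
The statement is a Naimark-type dilation result, and I would prove the two implications separately, the ``if'' direction being essentially a one-line computation and the ``only if'' direction being the substantive part.

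For the \emph{if} direction, suppose $M\supseteq H$ is a Hilbert space with orthonormal basis $\{e_j\}_{j\in\Bbb J}$, $P$ is the orthogonal projection of $M$ onto $H$, and $f_j=Pe_j$. Then for every $f\in H$ we have $Pf=f$ and $P^*=P$, so $\langle f,f_j\rangle=\langle f,Pe_j\rangle=\langle Pf,e_j\rangle=\langle f,e_j\rangle$; applying Parseval's identity for the orthonormal basis $\{e_j\}$ of $M$ gives $\sum_{j\in\Bbb J}|\langle f,f_j\rangle|^2=\sum_{j\in\Bbb J}|\langle f,e_j\rangle|^2=\|f\|^2$, so $\{f_j\}_{j\in\Bbb J}$ is a Parseval frame.

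For the \emph{only if} direction, let $\{f_j\}_{j\in\Bbb J}$ be a Parseval frame. Then it is Bessel with bound $1$, so the analysis operator $T^*\colon H\to\ell^2(\Bbb J)$, $T^*f=\{\langle f,f_j\rangle\}_{j\in\Bbb J}$, and the synthesis operator $T\colon\ell^2(\Bbb J)\to H$, $T\{c_j\}=\sum_j c_jf_j$, are bounded and adjoint to each other, and the frame operator satisfies $S=TT^*=I_H$. Hence $T^*$ is an isometry: $\|T^*f\|^2=\langle TT^*f,f\rangle=\|f\|^2$. Set $Q:=T^*T\in\mathcal B(\ell^2(\Bbb J))$; then $Q^*=Q$ and $Q^2=T^*(TT^*)T=T^*T=Q$, so $Q$ is the orthogonal projection of $\ell^2(\Bbb J)$ onto $\mathcal R(T^*)$. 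Denoting by $\{e_j\}_{j\in\Bbb J}$ the standard orthonormal basis of $\ell^2(\Bbb J)$, we have $Te_j=f_j$, and therefore $Qe_j=T^*Te_j=T^*f_j$. Finally, take $M:=\ell^2(\Bbb J)$ and identify $H$ with the closed subspace $\mathcal R(T^*)$ of $M$ via the isometry $T^*$; under this identification $Q$ becomes the orthogonal projection $P$ of $M$ onto $H$, and the computation $Qe_j=T^*f_j$ says precisely that $Pe_j=f_j$ for each $j\in\Bbb J$.

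The only delicate point is the last sentence: the concrete space $\ell^2(\Bbb J)$ is not literally a superset of the abstract $H$, so one must either carry the isometric identification $T^*\colon H\to\mathcal R(T^*)$ explicitly throughout, or replace $\ell^2(\Bbb J)$ by the genuine superspace $M=H\oplus\big(\mathcal R(T^*)\big)^{\perp}$ and transport $T$, $T^*$ and $\{e_j\}$ across the resulting unitary; since earlier in the paper unitarily equivalent frames are treated as the same, this causes no loss of generality. Everything else is routine bookkeeping with the analysis and synthesis operators.
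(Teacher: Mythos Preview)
The paper does not supply its own proof of this lemma: it is stated as a citation from \cite{casazza2} and used as a black box (notably in Theorem~\ref{tt1}). So there is no in-paper argument to compare against.

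Your argument is the standard Naimark-dilation proof and is correct in both directions. The computation $Q^2=T^*(TT^*)T=T^*T=Q$ from $TT^*=I_H$ is the key step, and your remark about identifying $H$ with $\mathcal R(T^*)$ (or, equivalently, forming $M=H\oplus\mathcal R(T^*)^\perp$) properly addresses the only subtlety. This is essentially the proof given in the cited source and in standard references such as \cite{ch} and \cite{han}, so nothing further is needed.
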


In next part, we review notations of $K$-frames  from \cite{arab, gal, xx}.
\begin{definition}[$K$-frame]\cite{gal}
Let $\{f_j\}_{j\in\Bbb J}$ be a sequence of members of $H$ and
$K\in\mathcal{B}(H)$. We say that $\{f_j\}_{j\in\Bbb J}$ is a
$K$-frame  for $H$ if there exist $0<A\leq B<\infty$ such that for
each $f\in H$,
\begin{eqnarray}\label{kframe}
A\Vert K^*f\Vert^2\leq\sum_{j\in\Bbb J}\vert\langle f,f_j\rangle\vert^2\leq B\Vert f\Vert^2.
\end{eqnarray}
\end{definition}
 We say that $\{f_j\}_{j\in\Bbb J}$ is a \textit{equal-norm-$K$-frame}  for $H$, when $\{f_j\}_{j\in\Bbb J}$ is a $K$-frame and $\Vert f_j\Vert=c$ for every $j\in\Bbb J$. When $c=\Vert K \Vert$, we call $\{f_j\}_{j\in\Bbb J}$ a \textit{$K$-norm-frame} .
We say that $\{f_j\}_{j\in\Bbb J}$ is a tight $K$-frame  for $H$, if we have
$$\sum_{j\in\Bbb J}\vert\langle f,f_j\rangle\vert^2=A\Vert K^*f\Vert^2,$$
for each $f\in H$. If $A=1$, the set $\{f_j\}_{j\in\Bbb J}$ is called a Parseval $K$-frame.

Since every $K$-frame is a Bessel sequence, then the synthesis and analysis operators is defined by \eqref{ops} and \eqref{ops2}.
But, the frame operator $S=TT^*$ is not invertible on $H$. However, $S:\mathcal{R}(K)\rightarrow S\big(\mathcal{R}(K)\big)$ is invertible  when the operator $K$ has closed range (\cite{xx}). So, we can construct a Parseval frame for $\mathcal{R}(K)$ by the following result.
\begin{proposition}\label{pro2.1}
Let $\{f_j\}_{j\in\Bbb J}$ be a $K$-frame for $H$ and $K$ has closed range. If $S\big(\mathcal{R}(K)\big)=\mathcal{R}(K)$, then
\begin{enumerate}
\item[(I)] $\{S^{\frac{1}{2}}P_{\mathcal{R}(K)}f_j\}_{j\in\Bbb J}$ is a Parseval frame for $\mathcal{R}(K)$.
\item[(II)] $\{S^{\frac{1}{2}}P_{\mathcal{R}(K)}f_j\}_{j\in\Bbb J}$ is a Parseval $P_{\mathcal{R}(K)}$-frame for $H$.
\end{enumerate}
\end{proposition}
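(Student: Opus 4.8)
The plan is to first reduce both parts to a statement about the closed subspace $R:=\mathcal{R}(K)$ and then verify the Parseval identity by a direct computation with the frame operator. Write $P:=P_{\mathcal{R}(K)}$. Since $K$ has closed range, $R$ is closed, and the hypothesis $S(R)=R$ says precisely that $R$ is invariant under the positive self-adjoint operator $S$ and that $S|_R:R\to R$ is a bijection, hence (being positive) boundedly invertible on $R$. Because $R$ is $S$-invariant and $S=S^*$, the orthogonal complement $R^{\perp}$ is invariant too, so $P$ commutes with $S$; by the continuous functional calculus $P$ then commutes with $S^{1/2}$ as well. In particular $S^{1/2}$ maps $R$ into $R$, so each vector $g_j:=S^{1/2}Pf_j$ lies in $R$, and $S^{1/2}|_R$ is the positive square root of the invertible positive operator $S|_R$ on the Hilbert space $R$.

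For part (I), I would fix $h\in R$ and expand $\sum_{j}\vert\langle h,g_j\rangle\vert^2$. Using that $S^{1/2}$ and $P$ are self-adjoint and that $Ph=h$, each inner product rearranges as
\begin{equation*}
\langle h,S^{1/2}Pf_j\rangle=\langle PS^{1/2}h,f_j\rangle=\langle S^{1/2}h,f_j\rangle ,
\end{equation*}
since $S^{1/2}h\in R$. Substituting $g=S^{1/2}h$ into the frame-operator identity $\sum_j\vert\langle g,f_j\rangle\vert^2=\langle Sg,g\rangle$ recorded earlier collapses the sum to the single quadratic form $\langle S\,S^{1/2}h,\,S^{1/2}h\rangle$, which by the commutation of $P$, $S$ and $S^{1/2}$ lives entirely inside $R$. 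The decisive task is then to show this quadratic form equals $\Vert h\Vert^2$ for every $h\in R$, i.e. to arrange that the accumulated powers of $S$ cancel down to the identity on $R$. I expect this normalization to be the main obstacle: everything preceding it is formal rearrangement, whereas forcing the quadratic form down to $\Vert h\Vert^2$ is the genuine content of the assertion, and it is exactly the point at which one must invoke the functional calculus of the invertible positive operator $S|_R$ and the hypothesis $S(R)=R$ (which is what keeps that calculus inside $R$ and makes the relevant powers invertible).

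Finally, part (II) should follow from (I) with no extra work. Each $g_j$ lies in $R=\mathcal{R}(P)$, so for an arbitrary $h\in H$ we have $\langle h,g_j\rangle=\langle Ph,g_j\rangle$ with $Ph\in R$; applying part (I) to the vector $Ph$ gives
\begin{equation*}
\sum_{j}\vert\langle h,g_j\rangle\vert^2=\sum_{j}\vert\langle Ph,g_j\rangle\vert^2=\Vert Ph\Vert^2=\Vert P^*h\Vert^2 ,
\end{equation*}
using $P^*=P$. Since this holds for every $h\in H$, the system $\{g_j\}_{j\in\Bbb J}$ satisfies the defining equality of a Parseval $P_{\mathcal{R}(K)}$-frame for $H$, which is the assertion of (II).
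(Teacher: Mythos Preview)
There is a genuine gap, and it originates in a misprint in the statement rather than in your reasoning. Your computation is correct up to the point where you obtain
\[
\sum_{j\in\Bbb J}\vert\langle h,S^{1/2}Pf_j\rangle\vert^2=\langle S\,S^{1/2}h,\,S^{1/2}h\rangle=\langle S^{2}h,h\rangle=\Vert Sh\Vert^{2},
\]
but from here no functional calculus will force the ``accumulated powers of $S$'' to collapse to the identity on $R=\mathcal{R}(K)$: the equality $\langle S^{2}h,h\rangle=\Vert h\Vert^{2}$ for all $h\in R$ holds if and only if the positive operator $S|_{R}$ equals $I_{R}$, which is not assumed. A one-dimensional example already fails (take $H=\Bbb C$, $K=2I$, $f_{1}=2$; then $S=4I$, and $S^{1/2}Pf_{1}=4$ is not a Parseval frame for $\Bbb C$).

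The resolution is that the proposition, as stated, has the wrong exponent: the intended family is $\{S^{-1/2}P_{\mathcal{R}(K)}f_j\}_{j\in\Bbb J}$, and indeed the paper's own proof works throughout with $S^{-1/2}$, the positive square root of $(S|_{R})^{-1}$. With that correction your argument goes through verbatim: substituting $g=S^{-1/2}h$ into the frame-operator identity gives
\[
\sum_{j\in\Bbb J}\vert\langle h,S^{-1/2}Pf_j\rangle\vert^2=\langle S\,S^{-1/2}h,\,S^{-1/2}h\rangle=\langle h,h\rangle=\Vert h\Vert^{2},
\]
which is exactly the Parseval identity on $R$. Your derivation of (II) from (I) via $\langle h,g_j\rangle=\langle Ph,g_j\rangle$ is correct and is the same reduction the paper carries out (there phrased as an orthogonal decomposition $f=f_1+f_2$ with $f_1\in R$, $f_2\in R^{\perp}$).
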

\begin{proof}
(I). Since the operator $S$ is invertible and positive on $\mathcal{R}(K)$, therefore $S^{-1}\Big\vert_{S\big(\mathcal{R}(K)\big)}$ has a unique positive root as $S^{-\frac{1}{2}}$ and this operator commutes with $S^{-1}$ and so with $S$. Now, for each $f\in\mathcal{R}(K)$ we have
\begin{align*}
f&=S^{-1}S\vert_{\mathcal{R}(K)}f\\
&=S^{-\frac{1}{2}}S\vert_{\mathcal{R}(K)}S^{-\frac{1}{2}}f\\
&=\sum_{j\in\Bbb J}\langle S^{-\frac{1}{2}}f, P_{\mathcal{R}(K)}f_j\rangle S^{-\frac{1}{2}}P_{\mathcal{R}(K)}f_j\\
&=\sum_{j\in\Bbb J}\langle f, S^{-\frac{1}{2}}P_{\mathcal{R}(K)}f_j\rangle S^{-\frac{1}{2}}P_{\mathcal{R}(K)}f_j.
\end{align*}
Therefore,
$$\Vert f\Vert^2=\langle f,f\rangle=\sum_{j\in\Bbb J}\vert\langle f,S^{-\frac{1}{2}}P_{\mathcal{R}(K)}f_j\rangle\vert^2.$$
This completes the proof.

(II). Take $f\in H$, so we can write $f=f_1+f_2$ where $f_1\in\mathcal{R}(K)$ and $f_2\in\mathcal{R}(K)^{\perp}$. Then
$$\langle S^{-\frac{1}{2}}P_{\mathcal{R}(K)}f_i, f_2\rangle=0.$$
Thus, we compute that
\begin{align*}
\sum_{j\in\Bbb J}\vert\langle f, S^{-\frac{1}{2}}P_{\mathcal{R}(K)}f_j\rangle\vert^2&=\sum_{j\in\Bbb J}\vert\langle f_1, S^{-\frac{1}{2}}P_{\mathcal{R}(K)}f_j\rangle+\langle f_2, S^{-\frac{1}{2}}P_{\mathcal{R}(K)}f_j\rangle\vert^2\\
&=\sum_{j\in\Bbb J}\vert\langle f_1, S^{-\frac{1}{2}}P_{\mathcal{R}(K)}f_j\rangle\vert^2+
\sum_{j\in\Bbb J}\vert\langle f_2, S^{-\frac{1}{2}}P_{\mathcal{R}(K)}f_j\rangle\vert^2\\
&\qquad +2\sum_{j\in\Bbb J}\mathop{\rm Re}\langle f_1, S^{-\frac{1}{2}}P_{\mathcal{R}(K)}f_j\rangle\langle  S^{-\frac{1}{2}}P_{\mathcal{R}(K)}f_j ,f_2\rangle\\
&=\sum_{j\in\Bbb J}\vert\langle f_1, S^{-\frac{1}{2}}P_{\mathcal{R}(K)}f_j\rangle\vert^2.
\end{align*}
Now, by item (I), we conclude that
$$\sum_{j\in\Bbb J}\vert\langle f, S^{-\frac{1}{2}}P_{\mathcal{R}(K)}f_j\rangle\vert^2=\Vert P_{\mathcal{R}(K)}f\Vert^2.$$
\end{proof}
\begin{remark}
Assume that $F=\{f_j\}_{j\in\Bbb J}$ is a Parseval $K$-frame for $H$ and $K$ has closed range. It is obvious that $\ker KK^*=\ker K^*$, therefore we get $\mathcal{R}(KK^*)=\mathcal{R}(K)$. If $S$ is the frame operator of $F$, then $\langle Sf,f\rangle=\Vert K^*f\Vert^2$. So, $S=KK^*$ and we conclude that
$\mathcal{R}(S)=\mathcal{R}(K)$.
\end{remark}
The following theorem gives a characterization of every $K$-frames using linear bounded operators.
\begin{lemma}\label{lem3}\cite{gal}
Let $\{f_j\}_{j\in\Bbb J}$ be members of $H$. Then $\{f_j\}_{j\in\Bbb J}$ is a $K$-frame if and only if there exists a linear bounded operator $T:\ell^2(\Bbb J)\rightarrow H$ such that $f_j=T\delta_j$ and $\mathcal{R}(K)\subseteq\mathcal{R}(T)$, where $\{\delta_j\}_{j\in\Bbb J}$ is an orthonormal basis for $\ell^2(\Bbb J)$.
\end{lemma}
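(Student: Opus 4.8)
The plan is to read both inequalities in \eqref{kframe} as operator inequalities and then to pass between an operator inequality and a range inclusion by means of the Douglas-type factorization lemma (Lemma~\ref{dag}). The one preliminary I would record first is that as soon as $\{f_j\}_{j\in\Bbb J}$ is a Bessel sequence, the synthesis operator $T$ of \eqref{ops} is bounded, its adjoint is given by \eqref{ops2}, one has $f_j=T\delta_j$ automatically, and $TT^*=S$ is the frame operator; hence $\sum_{j\in\Bbb J}|\langle f,f_j\rangle|^2=\langle TT^*f,f\rangle=\|T^*f\|^2$ for every $f\in H$.

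For the forward implication, suppose $\{f_j\}_{j\in\Bbb J}$ is a $K$-frame with bounds $A,B$. The right-hand inequality in \eqref{kframe} says precisely that $\{f_j\}_{j\in\Bbb J}$ is Bessel with bound $B$, so the bounded synthesis operator $T$ exists and satisfies $f_j=T\delta_j$. The left-hand inequality then reads $A\langle KK^*f,f\rangle\le\langle TT^*f,f\rangle$ for all $f\in H$, i.e. $KK^*\le\frac{1}{A}TT^*$, which is condition (2) of Lemma~\ref{dag} applied with $L_1=K$ and $L_2=T$ (and $\lambda=1/\sqrt{A}$); the equivalent condition (1) of that lemma gives $\mathcal{R}(K)\subseteq\mathcal{R}(T)$, as required.

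For the converse, assume a bounded operator $T:\ell^2(\Bbb J)\to H$ is given with $f_j=T\delta_j$ and $\mathcal{R}(K)\subseteq\mathcal{R}(T)$. Expanding in the orthonormal basis $\{\delta_j\}_{j\in\Bbb J}$ gives $\sum_{j\in\Bbb J}|\langle f,f_j\rangle|^2=\sum_{j\in\Bbb J}|\langle T^*f,\delta_j\rangle|^2=\|T^*f\|^2\le\|T\|^2\|f\|^2$, so $B=\|T\|^2$ serves as the upper bound. For the lower bound I would invoke Lemma~\ref{dag} in the other direction: $\mathcal{R}(K)\subseteq\mathcal{R}(T)$ yields some $\lambda>0$ with $KK^*\le\lambda^2 TT^*$, whence $\|K^*f\|^2=\langle KK^*f,f\rangle\le\lambda^2\langle TT^*f,f\rangle=\lambda^2\sum_{j\in\Bbb J}|\langle f,f_j\rangle|^2$, so that $A=\lambda^{-2}$ is a valid lower $K$-frame bound and \eqref{kframe} holds.

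I do not expect a genuine obstacle here: all the force is carried by Lemma~\ref{dag}, and the only point to watch is that no closed-range hypothesis on $T$ or $K$ is needed, since Douglas' lemma converts the range inclusion into the operator inequality (and back) without such a restriction. As an alternative to the lower-bound step one could instead take the factorization $K=TV$ supplied by part (3) of Lemma~\ref{dag} and estimate $\|K^*f\|=\|V^*T^*f\|\le\|V\|\,\|T^*f\|$ directly, which is the same inequality in another guise.
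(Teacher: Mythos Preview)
The paper does not supply its own proof of this lemma; it is quoted from \cite{gal} and only the remark that ``the operator $T$ is the synthesis operator'' is added afterward. Your argument is correct and is exactly the standard route (and the one in \cite{gal}): identify $T$ with the synthesis operator, read the two inequalities in \eqref{kframe} as the Bessel bound and the operator inequality $AKK^*\le TT^*$, and trade the latter for the range inclusion $\mathcal{R}(K)\subseteq\mathcal{R}(T)$ via Lemma~\ref{dag}. Nothing is missing.
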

In the proof of Lemma \ref{lem3}, we can check that the operator $T$ is the synthesis operator (see Theorem 4 in \cite{gal}).
\begin{proposition}\label{lem}
Let $\{f_j\}_{j\in\Bbb J}$ be a Parseval $K$-frame for $H$ and $\{\lambda_j\}_{j=1}^{n}$ be the eigenvalues for the frame operator $S$ where $n=\dim H$. Then $\lambda_j=\Vert K\Vert^2$ for each $j=1,2,\cdots,n$ and
$$\sum_{j\in\Bbb J}\Vert f_j\Vert^2=n\Vert K\Vert^2.$$
\end{proposition}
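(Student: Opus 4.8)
The plan is to route everything through the single operator identity $S = KK^{*}$. From the Parseval $K$-frame axiom we have $\langle Sf,f\rangle = \sum_{j\in\Bbb J}|\langle f,f_j\rangle|^{2} = \|K^{*}f\|^{2} = \langle KK^{*}f,f\rangle$ for every $f\in H$, and since $S$ and $KK^{*}$ are both self-adjoint this forces $S = KK^{*}$; this is exactly what the Remark preceding the proposition records (its standing hypothesis, that $K$ has closed range, is automatic here since $\dim H<\infty$). Consequently $\{\lambda_j\}_{j=1}^{n}$ is the eigenvalue list of $KK^{*}$, and its largest entry is $\lambda_{\max} = \|S\| = \|KK^{*}\| = \|K^{*}\|^{2} = \|K\|^{2}$.

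The substantive step is then to show that \emph{every} $\lambda_j$ equals $\|K\|^{2}$, equivalently that $KK^{*} = \|K\|^{2}I_{H}$. Here I would lean on Proposition \ref{pro2.1} and Lemma \ref{lem3}, together with the relation $\mathcal R(S) = \mathcal R(K)$ also noted in the Remark, trying to force the singular values of $K$ all to coincide (and to rule out the eigenvalue $0$). Granting that, the norm identity is a short trace computation: choosing an orthonormal basis $\{e_i\}_{i=1}^{n}$ of $H$ and using the Bessel bound to interchange the (possibly infinite) sum over $j$ with the finite sum over $i$,
$$\sum_{j\in\Bbb J}\|f_j\|^{2} = \sum_{i=1}^{n}\sum_{j\in\Bbb J}|\langle e_i,f_j\rangle|^{2} = \sum_{i=1}^{n}\langle Se_i,e_i\rangle = \mathop{\rm tr}(S) = \sum_{j=1}^{n}\lambda_j = n\|K\|^{2}.$$

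I expect the main obstacle to be precisely the claim ``$\lambda_j = \|K\|^{2}$ for every $j$''. The identity $S = KK^{*}$ hands us $\lambda_j \le \|K\|^{2}$ for free, but promoting this to equality for all $j$ is a real structural requirement on $K$ --- it says that $K/\|K\|$ is a co-isometry --- rather than a formal consequence of the Parseval property alone; so the heart of the matter is justifying that one equality, and that is the step where some additional hypothesis on $K$ (or on the frame) has to be brought in. Once it is in place, both the eigenvalue assertion and the norm-sum identity follow immediately by the computation above.
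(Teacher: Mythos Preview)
Your route --- derive $S=KK^{*}$ from the Parseval identity and then invoke the trace formula $\sum_{j\in\Bbb J}\|f_j\|^{2}=\sum_{i=1}^{n}\lambda_i$ --- is exactly the paper's, and you have correctly isolated the only nontrivial step: showing that \emph{every} eigenvalue of $S$ equals $\|K\|^{2}$, not just the largest one. The paper's argument at this point reads ``$\|K^{*}f\|^{2}=\lambda_j\|f\|^{2}$ and we conclude that $\|K^{*}\|^{2}=\lambda_j$'', with $f$ an eigenvector for $\lambda_j$; but that identity only gives $\lambda_j\le\|K^{*}\|^{2}$, so the paper does not actually close the gap you flagged.

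In fact the gap cannot be closed, because the statement is false as written. On $H=\Bbb C^{2}$ take $K$ with $Ke_1=e_1$, $Ke_2=\tfrac12 e_2$, and set $f_1=e_1$, $f_2=\tfrac12 e_2$. For $f=ae_1+be_2$ one has $\sum_{j}|\langle f,f_j\rangle|^{2}=|a|^{2}+\tfrac14|b|^{2}=\|K^{*}f\|^{2}$, so $\{f_1,f_2\}$ is a Parseval $K$-frame; yet $S=KK^{*}$ has eigenvalues $1$ and $\tfrac14$ while $\|K\|^{2}=1$, and $\sum_j\|f_j\|^{2}=\tfrac54\ne 2=n\|K\|^{2}$. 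Your instinct that an extra hypothesis on $K$ is required --- namely $KK^{*}=\|K\|^{2}I_H$, i.e.\ that $K/\|K\|$ be a co-isometry --- was exactly right; neither Proposition~\ref{pro2.1} nor Lemma~\ref{lem3} can manufacture that hypothesis from the Parseval condition alone.
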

\begin{proof}
First, we notice that  the equality:
\begin{equation}\label{eig}
\sum_{j=1}^{n}\lambda_j=\sum_{j\in\Bbb J}\Vert f_j\Vert^2
\end{equation}
holds for $K$-frames which has been presented in \cite{ch} for ordinary frames. Suppose that $f\in H$ and $\lambda_j$ is an eigenvalue of $S$ for arbitrary $j\in\Bbb J$. Since $S=KK^*$, therefore, $\Vert K^*f\Vert^2=\lambda_j\Vert f\Vert^2$ and we conclude that $\Vert K^*\Vert^2=\lambda_j$ for each $j=1,2,\cdots,n$. Thus, by \eqref{eig}, this completes the proof.
\end{proof}
\begin{definition}\cite{arab}
Let $F:=\{f_j\}_{j\in\Bbb J}$ be a $K$-frame and $G:=\{g_j\}_{j\in\Bbb J}$ be a Bessel sequence for $H$ with synthesis operators $T$ and $\Theta$ respectively. We say that $G$ is a $K$-dual for $F$ if
\begin{eqnarray}\label{dual}
T\Theta^*=K.
\end{eqnarray}
\end{definition}
In this case, from \eqref{dual} we can write for each $f\in H$,
\begin{equation}\label{dual2}
Kf=\sum_{j\in\Bbb J}\langle f,g_j\rangle f_j,
\end{equation}
and it is easy to see that $G$ is a $K^*$-frame for $H$.
In the following, we can study to characterize all $K$-dual of a $K$-frame.
\begin{lemma}\cite{arab}\label{arab}
Let $F:=\{f_j\}_{j\in\Bbb J}$ be a $K$-frame. Then $\{g_j\}_{j\in\Bbb J}$ be a $K$-dual of $F$ if and only if $\{g_j\}_{j\in\Bbb J}=\{V\delta_j\}_{j\in\Bbb J}$ where $\{\delta_j\}_{j\in\Bbb J}$ is the standard orthonormal basis of $\ell^2$ and $V:\ell^2\rightarrow H$ is a bounded operator such that $TV^*=K$. In this case, $\{g_j\}_{j\in\Bbb J}$ is in fact a Parseval $V$-frame.
\end{lemma}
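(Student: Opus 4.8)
The plan is simply to unwind both definitions; the whole content is the observation that a sequence of the form $\{V\delta_j\}_{j\in\Bbb J}$ has $V$ itself as its synthesis operator, together with the elementary fact (used already in the paper) that a sequence is Bessel precisely when it admits a bounded synthesis operator.

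For the ``only if'' direction, suppose $G=\{g_j\}_{j\in\Bbb J}$ is a $K$-dual of $F$. By definition $G$ is a Bessel sequence, so it has a bounded synthesis operator $\Theta\colon\ell^2\to H$ with $\Theta\delta_j=g_j$, and the $K$-dual condition \eqref{dual} is exactly $T\Theta^*=K$. Taking $V:=\Theta$ gives $g_j=V\delta_j$ and $TV^*=K$, which is all that is required; this direction needs no computation.

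For the ``if'' direction, assume $g_j=V\delta_j$ for some $V\in\mathcal{B}(\ell^2,H)$ with $TV^*=K$. First I would verify that $G$ is Bessel: for $f\in H$,
$$\sum_{j\in\Bbb J}|\langle f,g_j\rangle|^2=\sum_{j\in\Bbb J}|\langle V^*f,\delta_j\rangle|^2=\|V^*f\|^2\le\|V\|^2\|f\|^2,$$
using that $\{\delta_j\}_{j\in\Bbb J}$ is an orthonormal basis of $\ell^2$. Hence $G$ has a synthesis operator $\Theta$, and since $\Theta\{c_j\}=\sum_j c_j V\delta_j=V\big(\sum_j c_j\delta_j\big)=V\{c_j\}$, one identifies $\Theta=V$; therefore $T\Theta^*=TV^*=K$ and $G$ is a $K$-dual of $F$. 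Moreover the identity $\sum_{j\in\Bbb J}|\langle f,g_j\rangle|^2=\|V^*f\|^2$ displayed above is precisely the Parseval $V$-frame equality (with Bessel bound $\|V\|^2$), which yields the final assertion.

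The only step that calls for a bit of care is the interchange of the series with the operator $V$ when identifying $\Theta$ with $V$; this is justified by the continuity of $V$ and the convergence of $\sum_j c_j\delta_j$ in $\ell^2$. I expect this to be the ``hardest'' point, and it is routine — everything else is a direct translation of the definitions of $K$-dual, synthesis operator, and ($V$-)frame.
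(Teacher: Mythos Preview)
Your argument is correct and is the natural proof: both directions reduce to identifying the synthesis operator of $\{V\delta_j\}_{j\in\Bbb J}$ with $V$ and reading off the $K$-dual condition $T\Theta^*=K$, while the Parseval $V$-frame equality drops out of the Bessel computation $\sum_j|\langle f,g_j\rangle|^2=\|V^*f\|^2$. The paper itself does not supply a proof of this lemma---it is quoted from \cite{arab}---so there is no in-paper argument to compare against; your write-up would serve perfectly well as the omitted proof.
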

\section{main results}
The following which is a general case of Theorem 3.2 in \cite{casazza2}, shows that each finite set of $K$-norm vectors can be extended to become a $K$-norm of a $K$-frame.
\begin{theorem}\label{t1}
If $F:=\{f_j\}_{j=1}^{M}$ is a set of $K$-norm vectors in $H$, then there is a $K$-norm-frame for $H$ which contains the set $F$.
\end{theorem}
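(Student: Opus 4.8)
The plan is to enlarge $F$ by appending a suitably rescaled orthonormal basis of $H$. Write $c=\Vert K\Vert$; by hypothesis $\Vert f_j\Vert=c$ for $1\le j\le M$, and recall that $\Vert K^\ast\Vert=\Vert K\Vert=c$. First I would fix an orthonormal basis $\{e_k\}$ for $H$ (a finite one, with $\dim H$ vectors, when $H$ is finite dimensional) and set $g_k=c\,e_k$. The candidate is
$$G:=\{f_1,\dots,f_M\}\cup\{g_k\},$$
which visibly contains $F$, and the claim to establish is that $G$ is a $K$-norm-frame for $H$.

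Then I would check the three requirements one by one. First, every vector of $G$ has norm $c=\Vert K\Vert$: the $f_j$ by hypothesis and $\Vert g_k\Vert=c\Vert e_k\Vert=c$. Second, $G$ is a Bessel sequence: for $f\in H$, Parseval's identity for $\{e_k\}$ gives
$$\sum_{j=1}^{M}\vert\langle f,f_j\rangle\vert^2+\sum_{k}\vert\langle f,g_k\rangle\vert^2\le\Big(\sum_{j=1}^{M}\Vert f_j\Vert^2\Big)\Vert f\Vert^2+c^2\Vert f\Vert^2=(M+1)c^2\Vert f\Vert^2,$$
so any $B\ge\max\{(M+1)c^2,\,1\}$ works as an upper bound. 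Third, for the lower $K$-frame bound I would simply discard the $f_j$-terms and use Parseval again, together with $\Vert K^\ast f\Vert\le\Vert K^\ast\Vert\,\Vert f\Vert=c\Vert f\Vert$:
$$\sum_{j=1}^{M}\vert\langle f,f_j\rangle\vert^2+\sum_{k}\vert\langle f,g_k\rangle\vert^2\ \ge\ c^2\sum_{k}\vert\langle f,e_k\rangle\vert^2=c^2\Vert f\Vert^2=\Vert K\Vert^2\Vert f\Vert^2\ \ge\ \Vert K^\ast f\Vert^2,$$
so \eqref{kframe} holds with $A=1$. Together these say exactly that $G$ is a $K$-norm-frame containing $F$. (Alternatively, $G$ is a $K$-frame by Lemma \ref{lem3}, since its synthesis operator is bounded by the Bessel estimate above and is surjective onto $H$, so its range certainly contains $\mathcal{R}(K)$.)

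I do not anticipate a genuine obstacle here. The point — and the reason this statement is softer than Theorem 3.2 of \cite{casazza2}, which it generalizes — is that we are only asked for a $K$-frame, not for a tight or Parseval one, so no spectral balancing of the appended vectors is needed and the naive scaling $g_k=\Vert K\Vert e_k$ already does the job. The only minor points to watch are the identity $\Vert K^\ast\Vert=\Vert K\Vert$, which is what lets the rescaled Parseval estimate $c^2\Vert f\Vert^2$ dominate $\Vert K^\ast f\Vert^2$, and the degenerate case $K=0$, in which $c=0$, every $f_j$ vanishes, and $G$ is a family of zero vectors — vacuously a $0$-norm-frame.
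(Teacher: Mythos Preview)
Your argument is correct. Both your proof and the paper's rest on the same elementary observation --- a scaled orthonormal basis $\{\Vert K\Vert e_k\}$ already satisfies the $K$-frame inequalities with $A=1$ and upper bound $\Vert K\Vert^2$ --- but the constructions differ. You keep $F$ intact and append a single scaled basis; the paper instead absorbs each $f_j$ into its own scaled basis by choosing, for every $1\le j\le M$, an orthonormal basis $\{e_{ij}\}_{i\in\Bbb I}$ of $H$ containing $u_j:=f_j/\Vert K\Vert$, and then takes the family $\{\Vert K\Vert e_{ij}\}_{j=1,\,i\in\Bbb I}^{M}$. Your route is shorter and adds fewer vectors. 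The paper's route costs $M$ copies of a basis but has the payoff that the resulting family is a \emph{tight} ordinary frame, since $\sum_{j=1}^{M}\sum_{i\in\Bbb I}\vert\langle f,\Vert K\Vert e_{ij}\rangle\vert^2=M\Vert K\Vert^2\Vert f\Vert^2$; this tightness is exactly what the subsequent corollary exploits to produce a $K$-norm \emph{tight} frame on $\mathcal{R}(K)$. Your $G$ is in general not tight, so if you also want that corollary you would need to upgrade the construction.
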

\begin{proof}
Let $1\leq j\leq M$ and $\{e_{ij}\}_{i\in\Bbb I}$ be an orthonormal basis for $H$ which contains the vector $u_j$ where $u_j:=\dfrac{f_j}{\Vert K\Vert}$. For any $f\in H$ we have
$$M\Vert K^*f\Vert^2\leq M\Vert K\Vert^2\Vert f\Vert^2=\sum_{j=1}^{M}\sum_{i\in\Bbb I}\vert\langle f,\Vert K\Vert e_{ij}\rangle\vert^2.$$
Therefore, $\{\Vert K\Vert e_{ij}\}_{j=1, i\in\Bbb I}^{M}$ is a $K$-frame for $H$ with bounds $M$ and $M\Vert K\Vert^2$ and is made up $K$-norm vectors.
\end{proof}
\begin{corollary}
If $K$ has closed range, $F:=\{f_j\}_{j=1}^{M}$ is a set of $K$-norm vectors in $H$ and $\Vert K\Vert\Vert K^{\dagger}\Vert=1$, then there is a $K$-norm-tight frame for $\mathcal{R}(K)$ which contains the set $F$.
\end{corollary}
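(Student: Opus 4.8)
The plan is to carry out the construction in the proof of Theorem~\ref{t1} inside the closed subspace $\mathcal{R}(K)$, once we have extracted from the hypothesis $\Vert K\Vert\Vert K^{\dagger}\Vert=1$ the identity $KK^{*}=\Vert K\Vert^{2}P_{\mathcal{R}(K)}$. For the latter, note that by Lemma~\ref{lem2} the operator $K^{\dagger}K$ is the orthogonal projection of $H$ onto $\mathcal{R}(K^{\dagger})=(\ker K)^{\perp}$; hence for $x\in(\ker K)^{\perp}$ we have $x=K^{\dagger}Kx$, so $\Vert x\Vert\leq\Vert K^{\dagger}\Vert\,\Vert Kx\Vert$, i.e. $\Vert Kx\Vert\geq\Vert x\Vert/\Vert K^{\dagger}\Vert=\Vert K\Vert\,\Vert x\Vert$. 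Together with $\Vert Kx\Vert\leq\Vert K\Vert\,\Vert x\Vert$ this gives $\Vert Kx\Vert=\Vert K\Vert\,\Vert x\Vert$ on $(\ker K)^{\perp}$, so $K=\Vert K\Vert\,V$ for a partial isometry $V$ with initial space $(\ker K)^{\perp}$ and final space $\mathcal{R}(K)$, and therefore $KK^{*}=\Vert K\Vert^{2}VV^{*}=\Vert K\Vert^{2}P_{\mathcal{R}(K)}$. In particular $\Vert K^{*}f\Vert^{2}=\Vert K\Vert^{2}\Vert P_{\mathcal{R}(K)}f\Vert^{2}$ for every $f\in H$.

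Now fix $1\leq j\leq M$. Since $\Vert f_j\Vert=\Vert K\Vert$ and $f_j\in\mathcal{R}(K)$, the vector $u_j:=f_j/\Vert K\Vert$ is a unit vector of $\mathcal{R}(K)$, hence extends to an orthonormal basis $\{e_{ij}\}_{i\in\Bbb I}$ of $\mathcal{R}(K)$ --- this is the step of Theorem~\ref{t1}, now performed with ambient space $\mathcal{R}(K)$ in place of $H$. For every $f\in H$ we have $\sum_{i\in\Bbb I}|\langle f,e_{ij}\rangle|^{2}=\Vert P_{\mathcal{R}(K)}f\Vert^{2}$, and summing over $j$,
$$\sum_{j=1}^{M}\sum_{i\in\Bbb I}\big|\langle f,\Vert K\Vert e_{ij}\rangle\big|^{2}=M\Vert K\Vert^{2}\Vert P_{\mathcal{R}(K)}f\Vert^{2}=M\Vert K^{*}f\Vert^{2}.$$
Thus $\{\Vert K\Vert e_{ij}\}_{j=1,\,i\in\Bbb I}^{M}$ consists of $K$-norm vectors and is at the same time an $M$-tight $K$-frame for $H$ and an $M\Vert K\Vert^{2}$-tight frame for $\mathcal{R}(K)$; since $\Vert K\Vert u_j=f_j$, it contains $F$, which is the desired $K$-norm-tight frame for $\mathcal{R}(K)$.

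The only genuine content is the first step, turning $\Vert K\Vert\Vert K^{\dagger}\Vert=1$ into $KK^{*}=\Vert K\Vert^{2}P_{\mathcal{R}(K)}$; after that the frame identity reduces to adding up $M$ copies of the Parseval identity for an orthonormal basis of $\mathcal{R}(K)$, exactly as in Theorem~\ref{t1}. I would also point out that the statement tacitly uses $F\subseteq\mathcal{R}(K)$, which is forced if ``contains $F$'' is to be meaningful for a frame of $\mathcal{R}(K)$; should one only assume the $f_j$ lie in $H$, one extends $\{P_{\mathcal{R}(K)}f_j\}$ instead and renormalizes.
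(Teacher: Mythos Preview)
Your argument is correct and follows essentially the same route as the paper: first use the hypothesis $\Vert K\Vert\Vert K^{\dagger}\Vert=1$ to force $\Vert K^{*}f\Vert=\Vert K\Vert\,\Vert f\Vert$ on $\mathcal{R}(K)$, then rerun the construction of Theorem~\ref{t1} with $\mathcal{R}(K)$ as ambient space. The only cosmetic difference is that the paper obtains the isometry identity directly for $K^{*}$ on $\mathcal{R}(K)$ via $(K^{\dagger})^{*}K^{*}=P_{\mathcal{R}(K)}$, whereas you argue on $K$ over $(\ker K)^{\perp}$ and then dualize to $KK^{*}=\Vert K\Vert^{2}P_{\mathcal{R}(K)}$; your remark that the statement tacitly requires $F\subseteq\mathcal{R}(K)$ is a worthwhile clarification the paper leaves implicit.
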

\begin{proof}
Chose $f\in\mathcal{R}(K)$, so
$$\Vert K\Vert\Vert f\Vert=\Vert K\Vert\Vert(K^{\dagger})^*K^*f\Vert\leq\Vert K^*f\Vert,$$
hence we get $\Vert K^*f\Vert=\Vert K\Vert\Vert f\Vert$. Now,  by Theorem \ref{t1}, if $\{e_{ij}\}_{i\in\Bbb I}$ is an orthonormal basis for $\mathcal{R}(K)$, then $\{\Vert K\Vert e_{ij}\}_{j=1, i\in\Bbb I}^{M}$ is a $K$-norm-tight frame for $\mathcal{R}(K)$.
\end{proof}
The next result is the same of Lemma \ref{n} for Parseval $K$-frames.
\begin{theorem}\label{tt1}
Let $\{f_j\}_{j\in\Bbb J}$ be a Parseval $K$-frame for $H$ and $K$ be closed range. Then there is a larger Hilbert space $M\supseteq\mathcal{R}(K^{\dagger})$ and an orthonormal basis $\{e_j\}_{j\in\Bbb J}$ for $M$ so that the orthonormal projection $P$ of $M$ onto $\mathcal{R}(K^{\dagger})$ satisfies $f_j=KPe_j$ for each $j\in\Bbb J$.
\end{theorem}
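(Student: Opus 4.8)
The plan is to transport the Parseval $K$-frame to an \emph{ordinary} Parseval frame living on $\mathcal{R}(K^{\dagger})$ via the pseudo-inverse, apply the classical Naimark dilation (Lemma \ref{n}) there, and then carry the resulting identity back through $K$. A preliminary observation I would record first is that every frame vector already lies in $\mathcal{R}(K)$: if $f\perp\mathcal{R}(K)$ then $\langle K^{*}f,g\rangle=\langle f,Kg\rangle=0$ for all $g\in H$, so $K^{*}f=0$, and the Parseval $K$-frame identity gives $\sum_{j\in\Bbb J}|\langle f,f_j\rangle|^{2}=\|K^{*}f\|^{2}=0$; hence $\langle f,f_j\rangle=0$ for every $j$, and since this holds for all $f\in\mathcal{R}(K)^{\perp}$ with $\mathcal{R}(K)$ closed, we get $P_{\mathcal{R}(K)}f_j=f_j$ for all $j$. (Alternatively this follows from $S=TT^{*}=KK^{*}$, as in the Remark after Proposition \ref{pro2.1}, together with Lemma \ref{dag}, which yields $\mathcal{R}(T)\subseteq\mathcal{R}(K)$ and hence $f_j=T\delta_j\in\mathcal{R}(K)$.)

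The heart of the argument is to show that $\{K^{\dagger}f_j\}_{j\in\Bbb J}$ is a Parseval frame for the subspace $W:=\mathcal{R}(K^{\dagger})$, which is closed by Lemma \ref{lem2}(1). Each $K^{\dagger}f_j$ belongs to $W$, so only the Parseval identity needs checking, and the key computation is: for any $f\in H$,
\begin{align*}
\sum_{j\in\Bbb J}\bigl|\langle f,K^{\dagger}f_j\rangle\bigr|^{2}
&=\sum_{j\in\Bbb J}\bigl|\langle (K^{\dagger})^{*}f,\,f_j\rangle\bigr|^{2}
=\bigl\|K^{*}(K^{\dagger})^{*}f\bigr\|^{2}\\
&=\bigl\|(K^{\dagger}K)^{*}f\bigr\|^{2}
=\|P_{W}f\|^{2},
\end{align*}
where the second equality is the Parseval $K$-frame identity applied to the vector $(K^{\dagger})^{*}f$, and the last uses $K^{\dagger}K=P_{\mathcal{R}(K^{\dagger})}=P_{W}$ (Lemma \ref{lem2}(4)) together with the self-adjointness of $P_{W}$. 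Restricting to $f\in W$ gives $\sum_{j}|\langle f,K^{\dagger}f_j\rangle|^{2}=\|f\|^{2}$, so $\{K^{\dagger}f_j\}_{j\in\Bbb J}$ is indeed a Parseval frame for $W=\mathcal{R}(K^{\dagger})$.

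It then remains to apply Lemma \ref{n} to this Parseval frame: it produces a Hilbert space $M\supseteq\mathcal{R}(K^{\dagger})$, an orthonormal basis $\{e_j\}_{j\in\Bbb J}$ of $M$, and the orthogonal projection $P$ of $M$ onto $\mathcal{R}(K^{\dagger})$ such that $K^{\dagger}f_j=Pe_j$ for every $j$. Applying $K$ to this identity and using $KK^{\dagger}=P_{\mathcal{R}(K)}$ (Lemma \ref{lem2}(3)) yields $KPe_j=KK^{\dagger}f_j=P_{\mathcal{R}(K)}f_j=f_j$, where the final equality is the preliminary observation of the first paragraph; this is exactly the assertion of the theorem.

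The two genuinely delicate points — and precisely the places where the closed-range hypothesis on $K$ enters — are the fact that the vectors of a Parseval $K$-frame lie in $\mathcal{R}(K)$ (without which the projection $P_{\mathcal{R}(K)}$ would survive the final computation and the conclusion would only read $P_{\mathcal{R}(K)}f_j=KPe_j$) and the pseudo-inverse bookkeeping certifying that the transported family $\{K^{\dagger}f_j\}$ is Parseval for exactly $\mathcal{R}(K^{\dagger})$ rather than a smaller or larger subspace; beyond these, everything is a routine transfer along $K$ and $K^{\dagger}$ through the ordinary Naimark theorem, using Lemmas \ref{lem1} and \ref{lem2}.
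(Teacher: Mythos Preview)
Your proof is correct and follows essentially the same route as the paper: establish $f_j\in\mathcal{R}(K)$, show $\{K^{\dagger}f_j\}_{j\in\Bbb J}$ is a Parseval frame for $\mathcal{R}(K^{\dagger})$ via the computation $\sum_j|\langle f,K^{\dagger}f_j\rangle|^{2}=\|K^{*}(K^{\dagger})^{*}f\|^{2}$, apply Lemma~\ref{n}, and then return through $K$. The only cosmetic difference is that the paper obtains $f_j\in\mathcal{R}(K)$ by invoking Lemma~\ref{lem3} and Lemma~\ref{dag} (so $\mathcal{R}(T)=\mathcal{R}(K)$), whereas you give a direct orthogonality argument and mention the Douglas route as an alternative; your version is in fact slightly more explicit about why $KK^{\dagger}f_j=f_j$ in the final step.
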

\begin{proof}
Assume that  $\{f_j\}_{j\in\Bbb J}$ is a Parseval $K$-frame for $H$. Via Lemma \ref{lem3}, there exists an orthonormal basis $\{e'_j\}_{j\in\Bbb J}$ which $f_j=Te'_j$ for all $j\in\Bbb J$. Since $\{f_j\}_{j\in\Bbb J}$ is a Parseval $K$-frame, we have $KK^*=TT^*$ and by Lemma \ref{dag} we can get $\mathcal{R}(T)=\mathcal{R}(K)$. Therefore $f_j\in\mathcal{R}(K)$. Suppose that $f\in\mathcal{R}(K^{\dagger})$, by Lemma \ref{lem2} we have
$$\sum_{j\in\Bbb J}\vert\langle f,K^{\dagger}f_j\rangle\vert^2=\sum_{j\in\Bbb J}\vert\langle (K^{\dagger})^* f,f_j\rangle\vert^2=\Vert K^*(K^{\dagger})^*f\Vert^2=\Vert f\Vert^2.$$
Hence, $\{K^{\dagger}f_j\}_{j\in\Bbb J}$ is a Parseval frame for $\mathcal{R}(K^{\dagger})$. So, by Lemma \ref{n}, there exists a larger Hilbert space $M\supseteq\mathcal{R}(K^{\dagger})$ and an orthonormal basis $\{e_j\}_{j\in\Bbb J}$ for $M$ so that $K^{\dagger}f_j=Pe_j$ for each $j\in\Bbb J$ or $f_j=KPe_j$.
\end{proof}
Next result is a general case of Theorem 3.4 in \cite{casazza2} for $K$-frames. In the following, we let $\dim H=n$ with an orthonormal basis $\{e_j\}_{j=1}^{n}$.
\begin{theorem}
Let $K$ be closed range, $P$ be a  orthonormal projection on $H$ onto $\mathcal{R}(K^{\dagger})$ such that $KP$ be a rank-$m$. Define
\begin{align*}
\mathscr{K}&=\{\mbox{all Parseval K-frames for}\, KPH\}\\
\mathscr{M}&=\{W\prec H,\quad \dim W=m\}.
\end{align*}
Then there exists a natural one-to-one correspondence between $\mathscr{K}$ and $\mathscr{M}$.
\end{theorem}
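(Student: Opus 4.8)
The plan is to carry $\mathscr K$ over to the setting of ordinary Parseval frames, where Lemma~\ref{n} applies, and then to use that over an $n$-dimensional space an $m$-dimensional subspace is the same datum as a rank-$m$ orthogonal projection; throughout, in accordance with the convention fixed just before the statement, the frames in $\mathscr K$ are indexed by $\{1,\dots,n\}$, and a Parseval $K$-frame for $KPH$ means a family $\{f_j\}_{j=1}^{n}\subseteq KPH$ with $\sum_{j}|\langle f,f_j\rangle|^{2}=\|K^{*}f\|^{2}$ for $f\in KPH$. I would begin by recording the elementary fact that $KP=K$: by Lemma~\ref{lem2}(5) the projection $P$ has range $\mathcal R(K^{\dagger})=(\ker K)^{\bot}$, so $\ker P=\ker K$, and therefore $KPf=Kf$ for every $f\in H$. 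Thus $KPH=\mathcal R(K)$, the hypothesis that $KP$ has rank $m$ merely records $\dim\mathcal R(K)=m$, and simultaneously $\dim\mathcal R(K^{\dagger})=\dim(\ker K)^{\bot}=m$; so the two $m$-dimensional subspaces in play are $\mathcal R(K)$, which carries the $K$-frames, and $\mathcal R(K^{\dagger})$.

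The first step is to show that $\{f_j\}\mapsto\{K^{\dagger}f_j\}$ is a bijection from $\mathscr K$ onto the set $\mathscr P$ of all Parseval frames for $\mathcal R(K^{\dagger})$, with inverse $\{g_j\}\mapsto\{Kg_j\}$. The verification is the same computation as in the proof of Theorem~\ref{tt1} (and is related to Proposition~\ref{pro2.1}): for $\{f_j\}\in\mathscr K$ one has $KK^{\dagger}f_j=f_j$ by Lemma~\ref{lem1}, and for $f\in\mathcal R(K^{\dagger})$, using Lemma~\ref{lem2}, $(K^{\dagger})^{*}f\in\mathcal R(K)$ and $K^{*}(K^{\dagger})^{*}f=f$, so $\sum_{j}|\langle f,K^{\dagger}f_j\rangle|^{2}=\sum_{j}|\langle (K^{\dagger})^{*}f,f_j\rangle|^{2}=\|K^{*}(K^{\dagger})^{*}f\|^{2}=\|f\|^{2}$; conversely, for $\{g_j\}\in\mathscr P$ one checks $Kg_j\in\mathcal R(K)$, $K^{\dagger}Kg_j=P_{\mathcal R(K^{\dagger})}g_j=g_j$ by Lemma~\ref{lem2}(4), and $\sum_{j}|\langle f,Kg_j\rangle|^{2}=\sum_{j}|\langle K^{*}f,g_j\rangle|^{2}=\|K^{*}f\|^{2}\le\|K\|^{2}\|f\|^{2}$, so $\{Kg_j\}\in\mathscr K$; these identities also show the two maps are mutually inverse. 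Thus $\mathscr K$ is in natural bijection with $\mathscr P$, a family of Parseval frames for an $m$-dimensional space.

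The second step is to identify $\mathscr P$ with $\mathscr M$. Since $\dim H=n$, in the finite-dimensional form of Lemma~\ref{n} the larger space may be taken to be $H$ itself with the fixed orthonormal basis $\{e_j\}_{j=1}^{n}$, so that $\{g_j\}_{j=1}^{n}$ belongs to $\mathscr P$ if and only if $g_j=P\,Ue_j$ for some unitary $U$ on $H$; equivalently, the Gram operator $[\langle g_i,g_j\rangle]$, regarded as an operator on $H$ via $\delta_j\leftrightarrow e_j$, is a rank-$m$ orthogonal projection, it determines $\{g_j\}$ up to unitary equivalence, and every rank-$m$ orthogonal projection arises this way. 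Since we identify unitarily equivalent frames, $\mathscr P$ is therefore in bijection with the rank-$m$ orthogonal projections on $H$, hence with $\mathscr M$ through $W\leftrightarrow P_{W}$. Tracking the composition and using $KP=K$, the resulting correspondence sends $W\in\mathscr M$ to the class of the Parseval $K$-frame $\{KU_{W}e_j\}_{j=1}^{n}$, where $U_{W}$ is any unitary on $H$ carrying $W$ onto $\mathcal R(K^{\dagger})$ (such a $U_W$ exists since $\dim W=\dim\mathcal R(K^{\dagger})$, exactly as in the discussion preceding Lemma~\ref{dag}), and in the other direction it sends a Parseval $K$-frame $\{f_j\}$ to the range of the Gram operator of $\{K^{\dagger}f_j\}$.

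The arithmetic of the first step and the passage from rank-$m$ projections to $m$-dimensional subspaces are routine; the step I expect to be the main obstacle is establishing that the identification in the second step is genuinely one-to-one under the Section~2 convention that unitarily equivalent frames are regarded as the same. One must check that replacing the unitary $U_{W}$ by another admissible one, or replacing the orthonormal basis realizing Lemma~\ref{n}, moves $\{KU_{W}e_j\}$ only within a single equivalence class, and that the two assignments above are mutually inverse. A little care is needed because the reduction $f_j\mapsto K^{\dagger}f_j$ is not isometric, so strictly speaking the equivalence on $\mathscr K$ for which the correspondence is literally a bijection is the one pulled back through this reduction from unitary equivalence of Parseval frames for $\mathcal R(K^{\dagger})$; once this bookkeeping is settled, the ``natural one-to-one correspondence'' of the theorem follows.
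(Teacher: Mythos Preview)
Your argument is correct and is in the same spirit as the paper's, in that both rest on Theorem~\ref{tt1}: writing each Parseval $K$-frame as $f_j=KPe'_j$ for some orthonormal basis $\{e'_j\}$ is exactly the content of that theorem, and your first step (passing to the ordinary Parseval frame $\{K^{\dagger}f_j\}$ for $\mathcal R(K^{\dagger})$) is precisely how Theorem~\ref{tt1} is proved. The genuine difference lies in the second step. The paper does not use Gram operators; instead, having written $f_j=KPU_Fe_j$ with $U_Fe_j=e'_j$, it defines the correspondence directly by $\Phi(F)=U_F^{*}KPU_FH$ and checks surjectivity and injectivity by hand (for surjectivity, given $W$ choose a unitary $U$ with $UW=KPH$; for injectivity, equal images force $U_F^{*}f_j$ and $V_G^{*}g_j$ to agree, hence $F$ and $G$ are unitarily equivalent). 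Your route, by contrast, identifies a Parseval frame with the range of its Gram projection, which is a cleaner invariant and automatically handles the ambiguity in the choice of $U_F$ that the paper's definition of $\Phi$ leaves implicit. Note also that the two maps land at slightly different points: in the paper $\Phi(F)=U_F^{*}KPH=U_F^{*}\mathcal R(K)$, whereas your Gram construction yields $U_F^{*}PH=U_F^{*}\mathcal R(K^{\dagger})$; both are $m$-dimensional and both give bijections with $\mathscr M$, so the theorem is established either way. Your closing caveat about the equivalence relation on $\mathscr K$ being the one pulled back through $K^{\dagger}$ is well taken and is a point the paper glosses over.
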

\begin{proof}
For each $F:=\{f_j\}_{j=1}^{n}\in\mathscr{K}$, by Theorem \ref{t1}, there is an orthonormal basis $\{e'_j\}_{j=1}^{n}$ such that $f_j=KPe'_j$ for any $1\leq j\leq n$. Define a unitary operator $U_F$ on $H$ by $U_F e_j=e'_j$. So we have $f_j=KPU_F e_j$ which is unitarily equivalent to $f'_j:=U_F^*KPUe_j$.
Define
\begin{align*}
\Phi:&\mathscr{K}\longrightarrow \mathscr{M}\\
\Phi\{f_j\}_{j=1}^{n}&=U_F^*KPU_F H.
\end{align*}
It is clear that the operator $\Phi$ is well-defined. Assume that $W\in\mathscr{M}$, thus there exists a unitary operator $U$ on $H$ such that $UW=KPH$. So, we get $U^*KPUH=W$ while $\{KPe_j\}_{j=1}^{n}\in\mathscr{K}$ which corresponds to $W$. This means that $\Phi$  is surjective. Finally, suppose that $G:=\{g_j\}_{j=1}^{n}\in\mathscr{K}$ and $V_G$ is a unitary operator on $H$ such that $U^*_FKPU_F H=V_G^*KPV_G H$ which
$$V_G e_j=e''_j\, , \quad g_j=KPe''_j=KPV_G e_j.$$
Since $U^*_FKPU_F e_j=V_G^*KPV_G e_j$ for each $1\leq j\leq n$, we have
$U_F^* f_j=V^*_G g_j$. Hence, $f_j$ and $g_j$ are unitarily equivalent for each $j$. So, $F$ and $G$ are two same $K$-Parseval frames, then $\Phi$ is injective.
\end{proof}
\begin{theorem}
Let $\{f_j\}_{j\in\Bbb J}$ be a Parseval $K$-frame for $H$ with the synthesis operator $T$. If $\{g_j\}_{j\in\Bbb J}$ is a  $K$-dual for $\{f_j\}_{j\in\Bbb J}$ with the synthesis operator $\Theta$ which $\{g_j\}_{j\in\Bbb J}$ is a Parseval $K^*$-frame for $H$, then, for each $f\in H$ we have
$$\Vert (T^*-\Theta^*K^*)f\Vert^2=\Vert KK^*f\Vert^2-\Vert K^*f\Vert^2.$$
Moreover, if $K$ has closed range, we can get the following inequality:
$$\Vert (KK^*)^{-1}\Vert^{-2}(1-\Vert K\Vert^2)\leq \Vert T-K\Theta\Vert^2\leq \Vert K\Vert^4.$$
\end{theorem}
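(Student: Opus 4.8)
The plan is to convert each of the three hypotheses into an operator identity and then expand a squared norm by bilinearity. Since $\{f_j\}_{j\in\Bbb J}$ is a Parseval $K$-frame, $\sum_{j}|\langle f,f_j\rangle|^2=\|K^*f\|^2$ for every $f$, that is $\|T^*f\|^2=\|K^*f\|^2$, so $TT^*=KK^*$. Since $\{g_j\}_{j\in\Bbb J}$ is a Parseval $K^*$-frame, $\|\Theta^*f\|^2=\|(K^*)^*f\|^2=\|Kf\|^2$, hence $\Theta\Theta^*=K^*K$. Finally, the $K$-duality condition \eqref{dual} gives $T\Theta^*=K$. These three relations are all that is needed.

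For the equality I would expand $\|(T^*-\Theta^*K^*)f\|^2=\|T^*f\|^2-2\mathop{\rm Re}\langle T^*f,\Theta^*K^*f\rangle+\|\Theta^*K^*f\|^2$ and substitute termwise: $\|T^*f\|^2=\|K^*f\|^2$; next $\langle T^*f,\Theta^*K^*f\rangle=\langle T\Theta^*K^*f,f\rangle=\langle KK^*f,f\rangle=\|K^*f\|^2$, which is real, so the cross term contributes $-2\|K^*f\|^2$; and $\|\Theta^*K^*f\|^2=\langle\Theta\Theta^*K^*f,K^*f\rangle=\langle K^*KK^*f,K^*f\rangle=\|KK^*f\|^2$. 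Adding the three contributions yields exactly $\|KK^*f\|^2-\|K^*f\|^2$, as claimed.

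For the norm estimates, note $(T-K\Theta)^*=T^*-\Theta^*K^*$, so $\|T-K\Theta\|^2=\|T^*-\Theta^*K^*\|^2=\sup_{\|f\|=1}\bigl(\|KK^*f\|^2-\|K^*f\|^2\bigr)$ by the identity just proved. The upper bound is immediate: $\|KK^*f\|^2-\|K^*f\|^2\le\|KK^*f\|^2\le\|KK^*\|^2\|f\|^2=\|K\|^4\|f\|^2$. For the lower bound I would use closedness of $\mathcal R(K)$, so that $\mathcal R(K)=\mathcal R(KK^*)$ and $KK^*$ restricts to an invertible positive operator there: for a unit vector $f\in\mathcal R(K)$ one has $\|KK^*f\|\ge\|(KK^*)^{-1}\|^{-1}$ while $\|K^*f\|^2\le\|K\|^2$. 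To obtain the \emph{factored} constant $\|(KK^*)^{-1}\|^{-2}(1-\|K\|^2)$ rather than the weaker $\|(KK^*)^{-1}\|^{-2}-\|K\|^2$, I would observe that $\|KK^*f\|^2-\|K^*f\|^2$ is a genuine squared norm and hence nonnegative for every $f$; applying this to a top eigenvector of $KK^*$ forces $\|K\|^4\ge\|K\|^2$, i.e.\ $\|K\|\ge1$, and applying it to a bottom eigenvector on $\mathcal R(K)$ forces $\|(KK^*)^{-1}\|^{-2}\ge1$. With these two spectral facts, $\|K^*f\|^2\le\|K\|^2\le\|(KK^*)^{-1}\|^{-2}\|K\|^2$, so $\|KK^*f\|^2-\|K^*f\|^2\ge\|(KK^*)^{-1}\|^{-2}-\|(KK^*)^{-1}\|^{-2}\|K\|^2=\|(KK^*)^{-1}\|^{-2}(1-\|K\|^2)$; taking the supremum finishes, after setting aside the degenerate case $K=0$ where both sides vanish. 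The bilinear expansion is routine; the real obstacle is this last point—the factored constant only drops out once one notices that the two Parseval hypotheses together force $KK^*\ge I$ on $\mathcal R(K)$, which is precisely what licenses replacing $\|K\|^2$ by the larger quantity $\|(KK^*)^{-1}\|^{-2}\|K\|^2$.
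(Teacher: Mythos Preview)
Your proof of the identity is correct and uses the same three operator relations $TT^*=KK^*$, $\Theta\Theta^*=K^*K$, $T\Theta^*=K$ that the paper uses; the only cosmetic difference is that the paper first notes $T(T^*-\Theta^*K^*)=0$ and then obtains the result from a Pythagorean decomposition of $\Vert\Theta^*K^*f\Vert^2=\Vert T^*f-(T^*-\Theta^*K^*)f\Vert^2$, whereas you expand $\Vert(T^*-\Theta^*K^*)f\Vert^2$ directly. The two computations are the same arithmetic in different order.

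For the inequality your treatment is actually more complete than the paper's. The paper records only the single estimate $\Vert f\Vert^2\le\Vert(KK^*)^{-1}\Vert^2\Vert KK^*f\Vert^2$ for $f\in\mathcal{R}(K)$ and then asserts the factored lower bound $\Vert(KK^*)^{-1}\Vert^{-2}(1-\Vert K\Vert^2)$ without further comment; taken at face value that step delivers only the unfactored quantity $\Vert(KK^*)^{-1}\Vert^{-2}-\Vert K\Vert^2$. Your spectral observation---that the already–proved identity forces $(KK^*)^2\ge KK^*$, hence $KK^*\ge I$ on $\mathcal{R}(K)$, hence $\Vert(KK^*)^{-1}\Vert^{-2}\ge1$ and $\Vert K\Vert\ge1$---is precisely what upgrades this to the stated factored form (and, as you implicitly note, it also shows the lower bound is nonpositive, so the inequality carries no real content). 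The upper bound, which you handle in one line, is not argued at all in the paper. So on the second part your proposal is not a different route so much as a necessary completion of the paper's sketch.
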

\begin{proof}
Since
$T\Theta^*=K$ and $TT^*=KK^*$, so we get $T(T^*-\Theta^*K^*)=0$. Now, for any $f,g\in H$ we have
$$\langle T^*g,(T^*-\Theta^*K^*)f\rangle=\langle g,T(T^*-\Theta^*K^*)f\rangle=0.$$
Hence, we compute that
\begin{align*}
\Vert \Theta^*K^*f\Vert^2&=\Vert T^*f-T^*f+\Theta^*K^*f\Vert^2\\
&=\Vert T^*f-(T^*-\Theta^*K^*)f\Vert^2\\
&=\Vert T^*f\Vert^2+\Vert (T^*-\Theta^*K^*)f\Vert^2\\
&\qquad-\langle T^*f, (T^*-\Theta^*K^*)f\rangle-\overline{\langle T^*f, (T^*-\Theta^*K^*)f\rangle}\\
&=\Vert T^*f\Vert^2+\Vert (T^*-\Theta^*K^*)f\Vert^2.
\end{align*}
But, via the hypothesis, we have $\Vert T^*f\Vert^2=\Vert K^*f\Vert^2$ and $\Vert \Theta^*f\Vert^2=\Vert Kf\Vert^2$. For the second part, since $K$ has closed range,  it is easy to check that the operator $KK^*:\mathcal{R}(K)\rightarrow\mathcal{R}(K)$ is invertible. So, for any $0\neq f\in\mathcal{R}(K)$ we have $\Vert f\Vert^2\leq\Vert (KK^*)^{-1}\Vert^2\Vert KK^*f\Vert^2$. Hence,
$$\Vert T-K\Theta\Vert^2=\Vert T^*-\Theta^*K^*\Vert^2\geq\frac{\Vert (T^*-\Theta^*K^*)f\Vert^2}{\Vert f\Vert^2}\geq\Vert (KK^*)^{-1}\Vert^{-2}(1-\Vert K\Vert^2).$$
This completes the proof.
\end{proof}
In next result which is a general case of Proposition 3.3 in \cite{casazza2} , we define $K^{\natural}$ as  a left inverse of $K$.
\begin{theorem}
Let $F=\{f_j\}_{j=1}^{m}$ be a Parseval $K$-frame for the finite Hilbert space $H$ with the synthesis operator $T$ such that $T^*H\perp U^*H$ and $K^{\natural}f_j\perp Uf_j$ for each $1\leq j\leq m$ where $U:H\rightarrow\ell^{2}$ and $K^{\natural}\vert_F$ are an isometry. Then $\{f_j\}_{j=1}^{m}$ has infinitely many equal-norm dual $K$-frames.
\end{theorem}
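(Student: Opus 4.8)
The plan is to exhibit an explicit one--parameter family of $K$-duals of $F$ and then to read off from the two orthogonality hypotheses that every member of the family is equal--norm. First I would identify a canonical $K$-dual built from $K^{\natural}$. Since $F$ is a Parseval $K$-frame, its frame operator satisfies $\langle TT^{*}f,f\rangle=\Vert K^{*}f\Vert^{2}=\langle KK^{*}f,f\rangle$ for all $f\in H$, so $TT^{*}=KK^{*}$ (as in the Remark), and since $K^{\natural}K=I$ we get $K^{*}(K^{\natural})^{*}=(K^{\natural}K)^{*}=I$. Let $\Theta_{0}:=K^{\natural}T$; this is the synthesis operator of $\{K^{\natural}f_{j}\}_{j=1}^{m}$ because $\Theta_{0}\delta_{j}=K^{\natural}T\delta_{j}=K^{\natural}f_{j}$, and $T\Theta_{0}^{*}=TT^{*}(K^{\natural})^{*}=KK^{*}(K^{\natural})^{*}=K$. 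Hence $\{K^{\natural}f_{j}\}_{j=1}^{m}$ is a $K$-dual of $F$, and $\Vert K^{\natural}f_{j}\Vert=\Vert f_{j}\Vert$ for each $j$ because $K^{\natural}\vert_{F}$ is an isometry.

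Next I would perturb this dual by a null sequence manufactured from $U$. Put $u_{j}:=U^{*}\delta_{j}\in H$; since $U$ is an isometry, $U^{*}U=I_{H}$, so $\{u_{j}\}_{j=1}^{m}$ is a Parseval frame for $H$ with synthesis operator $U^{*}$, and the orthogonality hypothesis, read in $\ell^{2}$ as $\mathcal{R}(U)\perp\mathcal{R}(T^{*})$ (equivalently $\mathcal{R}(U)\subseteq\ker T$), gives $TU=0$. Therefore, for every $t\in\Bbb R$, the sequence $G_{t}:=\{K^{\natural}f_{j}+t\,u_{j}\}_{j=1}^{m}$ has synthesis operator $\Theta_{t}=K^{\natural}T+tU^{*}$ and
\[T\Theta_{t}^{*}=TT^{*}(K^{\natural})^{*}+t\,TU=K+0=K,\]
so each $G_{t}$ is again a $K$-dual of $F$ (in the finite-dimensional setting the Bessel requirement is automatic, and by Lemma \ref{arab} $G_{t}$ is in fact a Parseval $\Theta_{t}$-frame). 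Because $u_{j}\neq0$, distinct values of $t$ produce distinct duals, so we already obtain infinitely many dual $K$-frames.

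It remains to check that every $G_{t}$ is equal--norm, and this is the step I expect to be the real obstacle. For each $j$ the perpendicularity hypothesis $K^{\natural}f_{j}\perp u_{j}$ kills the cross term, so
\[\Vert K^{\natural}f_{j}+t\,u_{j}\Vert^{2}=\Vert K^{\natural}f_{j}\Vert^{2}+t^{2}\Vert u_{j}\Vert^{2}=\Vert f_{j}\Vert^{2}+t^{2}\Vert u_{j}\Vert^{2};\]
the first summand is $j$-independent because $K^{\natural}\vert_{F}$ is an isometry (so one reads the theorem with $F$ itself equal--norm, the only case in which $\Vert f_{j}\Vert$ is constant), and the second is $j$-independent provided the isometry $U$ is chosen so that $\{u_{j}\}$ is equal--norm, i.e. $\Vert u_{j}\Vert^{2}=\dim H/m$ for all $j$. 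Granting these, $\Vert K^{\natural}f_{j}+t\,u_{j}\Vert$ depends only on $t$, and $\{G_{t}\}_{t\in\Bbb R}$ is an infinite family of equal--norm dual $K$-frames. The difficulty is exactly that a $K$-dual may only be altered by adding a genuinely \emph{null} Bessel sequence, which rules out rescaling the $j$-th vector by a $j$-dependent factor, so one cannot equalize the norms ``one index at a time''; the two orthogonality conditions are precisely what split $\Vert K^{\natural}f_{j}+t\,u_{j}\Vert^{2}$ into two separately $j$-constant pieces. Without the isometry of $K^{\natural}\vert_{F}$ and a norm-calibrated $U$, a single scalar parameter would not be enough, and one would instead have to argue on dimensional grounds that the space of admissible null sequences meets all $m$ norm-equality constraints along a positive-dimensional set.
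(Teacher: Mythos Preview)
Your overall strategy coincides with the paper's: build the canonical $K$-dual $\{K^{\natural}f_j\}$ from $TT^*=KK^*$ and $K^{\natural}K=I$, then perturb it by a one-parameter family $g_j^{(a)}=K^{\natural}f_j+(\text{term in }U,a)$, and use the two orthogonality hypotheses to kill the cross term in $\Vert g_j^{(a)}\Vert^2$. The difference lies exactly at the point you flag as the obstacle, namely why the two surviving summands are $j$-independent.

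You leave this open, effectively adding the hypotheses that $F$ is equal-norm and that the $u_j=U^*\delta_j$ are equal-norm. The paper does not add hypotheses; it invokes Proposition~\ref{lem}. For a Parseval $K$-frame of $m$ vectors on $H$ with $\dim H=n$, that proposition asserts that every eigenvalue of $S=KK^*$ equals $\Vert K\Vert^2$ and that $\sum_j\Vert f_j\Vert^2=n\Vert K\Vert^2$; the paper then reads off $\Vert f_j\Vert^2=\tfrac{n}{m}\Vert K\Vert^2$ and, since $\Vert T^*f\Vert=\Vert K^*f\Vert$, also $\Vert T^*f_j\Vert^2=\Vert K^*f_j\Vert^2=\tfrac{n}{m}\Vert K\Vert^4$. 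For the perturbation the paper does \emph{not} use your bare $u_j=U^*\delta_j$ but writes it as $V(I-P)\delta_j$ with $V=aU$ and $P\delta_j:=T^*f_j$; then
\[
\Vert(I-P)\delta_j\Vert^2=\Vert\delta_j-T^*f_j\Vert^2=1-2\Vert f_j\Vert^2+\Vert K^*f_j\Vert^2,
\]
and the isometry of $U$ gives $\Vert V(I-P)\delta_j\Vert^2=a^2\bigl(1-2\Vert f_j\Vert^2+\Vert K^*f_j\Vert^2\bigr)$. Both pieces are now explicit functions of $n,m,\Vert K\Vert,a$ only, yielding the closed formula $\Vert g_j\Vert^2=a^2+(1-2a^2)\tfrac{n}{m}\Vert K\Vert^2+\tfrac{a^2n}{m}\Vert K\Vert^4$.

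So the ingredient your argument is missing is Proposition~\ref{lem}, together with the $(I-P)$ device that routes the norm of the perturbation through $\Vert T^*f_j\Vert=\Vert K^*f_j\Vert$ rather than through $\Vert U^*\delta_j\Vert$. With those in hand, no extra equal-norm assumptions on $F$ or on $U$ are needed.
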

\begin{proof}
Choose $V=aU$ where $a\neq0$. Since $(K^{\natural}T+U)T^*=K^*$, so via Lemma \ref{arab}, if $g_j=(K^{\natural}T+V)\delta_j$ for $1\leq j\leq m$, then $\{g_j\}_{j=1}^{m}$ is a dual $K$-frame of $\{f_j\}_{j=1}^{m}$ where $\{\delta_j\}_{j=1}^{m}$ is the orthonormal basis for $\ell^2$. Assume that $P:\ell^2\rightarrow T^*H$ is an orthonormal projection so that $P\delta_j=T^*f_j$ for any $1\leq j\leq m$. Since $T\delta_j=f_j$, therefore for each $1\leq j\leq m$,
$$g_j=K^{\natural}f_j+V(I-P)\delta_j.$$
But, it is easy to check that $\langle K^{\natural}f_j,V(I-P)\delta_j\rangle=0$ and also via Proposition \ref{lem}, if we suppose $n=\dim H$, then we can get
$$\Vert f_j\Vert^2=\frac{n}{m}\Vert K\Vert^2,$$
and
$$\Vert P\delta_j\Vert^2=\Vert T^*f_j\Vert^2=\Vert K^*f_j\Vert^2=\frac{n}{m}\Vert K\Vert^4.$$
Thus, for each $1\leq j\leq m$, we conclude that
\begin{align*}
\Vert g_j\Vert^2&=\langle K^{\natural}f_j+V(I-P)\delta_j,K^{\natural}f_j+V(I-P)\delta_j\rangle\\
&=\Vert K^{\natural}f_j\Vert^2+\Vert V(I-P)\delta_j\Vert^2\\
&=\Vert f_j\Vert^2+a^2\Vert (I-P)\delta_j\Vert^2\\
&=\Vert f_j\Vert^2+a^2\big(1+\Vert K^*f_j\Vert^2-2\Vert f_j\Vert^2\big)\\
&=a^2+\Big(1-2a^2\Big)\frac{n}{m}\Vert K\Vert^2+\frac{a^2n}{m}\Vert K\Vert^4.
\end{align*}
\end{proof}

\end{document}